\newtheorem{theorem}{Theorem}
\newtheorem{lemma}{Lemma}
\begin{document}
%%%%%%%%%%%%%%%%%%%%%%%%%%%%%%%%%%%%%%%%%%%%%%%%%%%%%%%%%%%%%%%%%%%%%%%%%%%%%%%%%%%%
%Modifying the paper to get more content into the body of the paper, not the Appendix
%%%%%%%%%%%%%%%%%%%%%%%%%%%%%%%%%%%%%%%%%%%%%%%%%%%%%%%%%%%%%%%%%%%%%%%%%%%%%%%%%%%%
%\title[A natural time change for the heat equation]{The impact of a natural time change on the convergence of the Crank-Nicolson scheme} %applied to the heat equation with Dirac delta function initial conditions with applications to Mathematical Finance}
\author{C. Reisinger}
\address
{Mathematical Institute, University of Oxford, 24-29 St Giles', Oxford, OX1 3LB, U.K.}
%\indent University of Oxford\newline}%
\email{reisinge@maths.ox.ac.uk}%
%\urladdr{http://www.authorone.uni-aone.de}
\author{A. Whitley}
\email{whitley@maths.ox.ac.uk}%

\title[A natural time change for the heat equation]{The impact of a natural time change on the convergence of the Crank-Nicolson scheme}
% Short title for running heads:
%\shorttitle{A natural time change for the heat equation}

%\title{A natural time change for the heat equation]{The impact of a natural time change on the convergence of the Crank-Nicolson scheme}
% Short title for running heads:

%\author{%
%{\sc Christoph Reisinger and Alan Whitley}\\[2pt]
%Mathematical Institute, University of Oxford,\\ 24-29 St Giles', Oxford, OX1 3LB, United Kingdom}
%%\email{reisinge@maths.ox.ac.uk}%
%%\email{whitley@maths.ox.ac.uk}%
%\shortauthorlist{C. Reisinger and A.Whitley}

\begin{abstract}
% Body of abstract:
{We first analyse the effect of a square root transformation to the time variable on the convergence of the Crank-Nicolson scheme when applied to the solution of the heat equation with Dirac delta function initial conditions. In the original variables, the scheme is known to diverge as the time step is reduced with the ratio, $\lambda$, of the time step to space step held constant and the value of $\lambda$ controls how fast the divergence occurs. After introducing the square root of time variable we prove that the numerical scheme for the transformed partial differential equation now always converges and that $\lambda$ controls the order of convergence, quadratic convergence being achieved for $\lambda$ below a critical value. Numerical results indicate that the time change used with an appropriate value of $\lambda$ also results in quadratic convergence for the calculation of the price, delta and gamma for standard European and American options without the need for Rannacher start-up steps.}\\\
\smallskip
%\newline
\noindent \textit{Keywords.}
% Keywords:
{Heat equation; Crank-Nicolson scheme; convergence; Black-Scholes; European option; American option; asymptotics; time change.}
\end{abstract}

\maketitle

\section{Introduction}

The Crank-Nicolson scheme is a popular time stepping scheme for the numerical approximation of diffusion equations and is particularly heavily used for applications in computational finance.
This is due to a combination of favourable properties: its simplicity and
ease of implementation (especially in one dimension);
second order accuracy in the timestep for sufficiently regular data;
unconditional stability in an $L_2$ sense.

However, there are well-documented problems which arise from the fact that the scheme is at the
cusp of stability in the sense that it is \emph{A-stable} \citep[see][]{S} but does not dampen so-called stiff components effectively (i.e., is not \emph{strongly A-stable}). Specifically, a straightforward Fourier analysis of the scheme applied to a standard finite difference discretisation of the heat equation shows that high wave number components are reflected in every time step but asymptotically do not diminish over time.
This gives rise to problems for applications with non-smooth data, where the behaviour of these components over time plays a crucial role in the smoothing of solutions.
Examples where this is relevant include Dirac initial data, as they appear naturally for adjoint equations \citep[see][]{GS}, and piecewise linear and step function payoff data in the computation of option prices \citep[see][]{WHD}.

There, the situation is exacerbated if sensitivities of the solution to its input data (so-called `Greeks') are needed; see \citet{SHAW} for an early discussion of issues with time stepping schemes in this context.

There is a sizeable body of literature concerned with schemes with better stability properties, in particular \emph{L-stable} schemes \citep[see][]{S} which share with the underlying continuous equation the property that high wave number components decay increasingly fast in time.
Examples for schemes with this property include sub-diagonal Pad\'e schemes based on rational approximations of the exponential function.
In \citet{RANNACHER}, Rannacher proposes to replace the first $2 \mu$ steps of a higher-order Pad\'e scheme of order $(\mu,\mu)$ (e.g., the Crank-Nicolson scheme for $\mu=1$) by a low-order sub-diagonal Pad\'e scheme with order $(\mu-1,\mu)$ (e.g., backward Euler for $\mu=1$), and shows that this restores optimal order for diffusion equations; the case $\mu=1$ is already analysed in \citet{LR}.

It is demonstrated in \citet{PVF} how this procedure can be used to obtain stable and accurate approximations to option values and their sensitivities for various non-smooth payoff functions.
This has been widely adopted in financial engineering practice.
%In
\citet{KVY}, Khaliq
% $\mathit{et\:al.}$ 
extend this to more general Pad\'e schemes and demonstrate numerically the stability and accuracy in practice for options with exotic payoffs.
Further adaptations are possible, for instance, where \citet{WKYVD}
%$, Wade $\mathit{et\:al.}$ 
give an application to discretely sampled barrier options, where discontinuities are introduced at certain points in time and a new `start-up' is needed (see already \citet{RANNACHER} for the provision of such restarts in an abstract context).

%In
\citet{GC} analyse the behaviour of the Crank-Nicolson numerical scheme when used to solve a convection-diffusion equation with constant coefficients and Dirac delta function initial conditions. The numerical solution obtained by this method diverges as the time step goes to zero with the ratio $\lambda=k/h$ of the time step, $k$, to the space step, $h$, held constant. Their analysis in the frequency domain shows that there are errors associated with high wave numbers which behave as $O(h^{-1})$ and which will eventually overwhelm the $O(h^2)$ errors associated with low wave numbers as the time step goes to zero, thus explaining why the errors in this scheme will eventually increase as the time step is reduced. Keeping the ratio $\lambda$ constant is desirable because the Crank-Nicolson central difference scheme is second order consistent in both space and time and the numerical scheme is more efficient if this property is exploited.  
The authors extended their analysis to show why the incorporation of a small number of initial fully implicit time steps (i.e., the Rannacher scheme, \citet{LR,RANNACHER}) eliminates this divergence.

In this paper, an alternative method of avoiding the divergence is proposed and analysed. The idea is to introduce a time change into the partial differential equation (PDE) by transforming the original time variable, $t$, to
\[\tilde t =\sqrt{t}\]
\noindent and solving the equation numerically using the Crank-Nicolson scheme in the new variables.
The time change will be applied to the heat equation (\ref{HE}) on $\mathbb{R}\:\times\:[0,T]$ and can be considered natural as the heat equation with suitable initial data admits similarity solutions which depend on $x/\sqrt{t}$.
From a probabilistic angle, the heat equation is the Kolmogorov forward equation for the transition density of Brownian motion whose standard deviation at time $t$ is $\sqrt{t}$.\\
\indent The main result of the paper is given by the following Theorem.
\begin{theorem}\label{T:MainTheorem}
The Crank-Nicolson central difference scheme with uniform time steps exhibits second order convergence (in the maximum norm) for the time-changed heat equation, with Dirac initial data, as the time step $k$ tends to zero with $\lambda=k/h$ held fixed, provided that $\lambda \leq 1/\sqrt{2}$. For $\lambda \geq 1/\sqrt{2}$ the scheme is still convergent with order $1/\lambda^2$.
\end{theorem}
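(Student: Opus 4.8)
The plan is to follow the frequency-domain analysis of \citet{GC}, but carried through the substitution $\tilde t=\sqrt t$. First I would record the transformed problem: setting $v(x,\tilde t)=u(x,\tilde t^{2})$, the heat equation (\ref{HE}) becomes the degenerate diffusion equation $v_{\tilde t}=\tilde t\,v_{xx}$, whose diffusivity vanishes linearly at $\tilde t=0$; with Dirac data its solution is the time-changed heat kernel, which is $C^{\infty}$ for $\tilde t>0$, obeys the similarity scaling $v(x,\tilde t)=\tilde t^{-1}f(x/\tilde t)$, and has the Gaussian spatial Fourier transform $\hat v(\xi,\tilde t)=e^{-\xi^{2}\tilde t^{2}/2}$. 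Discretising the Dirac data by the usual grid spike of height $h^{-1}$ at the origin and applying the Crank--Nicolson central difference scheme on the uniform mesh, the step-$n$ amplification factor (with the time-dependent coefficient sampled at the midpoint $\tilde t_{n-1/2}$) is $g_{n}(\xi)=\frac{1-\beta_{n}(\xi)}{1+\beta_{n}(\xi)}$ with $\beta_{n}(\xi)=(2n-1)\,\lambda^{2}\sin^{2}(\xi h/2)$. The decisive structural point, which is exactly what the $\sqrt t$ change of variables buys, is that $\beta_{n}$ depends on $h$ only through the bounded factor $\sin^{2}(\xi h/2)$: the Courant number $k/h^{2}$, which diverges at fixed $\lambda$ and produces the $O(h^{-1})$ high-frequency error in \citet{GC}, has been replaced by the bounded quantity $(2n-1)\lambda^{2}$. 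I would also note that, since the coefficient is affine in $\tilde t$, midpoint sampling makes the leading Riemann-sum error vanish, so no $O(k)$ term enters.

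Next I would bound the maximum-norm error by the $L^{1}$ norm over $[-\pi/h,\pi/h]$ of the discrete Fourier transform of the error and split this band into a low part $|\xi h|\le\xi_{0}$ and a high part $\xi_{0}<|\xi h|\le\pi$. On the low band a Taylor expansion of $\log g_{n}$, the exactness just noted, and the $O((\xi h)^{2})$ consistency of the central difference operator give $\prod_{n\le N}g_{n}(\xi)=\hat v(\xi,\tilde t_{N})\bigl(1+O\bigl((k^{2}+h^{2})\,p(\xi)\bigr)\bigr)$ for a polynomial $p$; since $\hat v(\cdot,\tilde t_{N})$ is Gaussian the low band contributes $O(k^{2}+h^{2})=O(h^{2})$, the classical second-order estimate for smooth data.

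The high band is where the real work sits. The initial error vanishes (the grid spike reproduces $\widehat\delta\equiv 1$), so by discrete variation of constants $\widehat{e}^{\,N}=\sum_{n=1}^{N}\bigl(\prod_{m=n+1}^{N}g_{m}\bigr)\frac{k\,\widehat\rho^{\,n}}{1+\beta_{n}}$, where $\widehat\rho^{\,n}$ is the truncation residual of the scheme relative to the semi-discrete-in-space solution $\widehat V(\xi,\tilde t)=\exp(-a(\xi)\tilde t^{2})$, $a(\xi)=2\sin^{2}(\xi h/2)/h^{2}$. Two estimates are needed. \emph{(i) Truncation error.} A Taylor expansion, after cancellation, gives $\widehat\rho^{\,n}\propto k^{2}a^{3}\tilde t_{n-1/2}^{3}e^{-a\tilde t_{n-1/2}^{2}}$, so $|\widehat\rho^{\,n}|\lesssim k^{2}a(\xi)^{3/2}$ and, crucially, $\widehat\rho^{\,n}$ is exponentially small unless $\tilde t_{n}\sim a(\xi)^{-1/2}$, i.e.\ only $O(1/\lambda)$ steps are ``active'' for each high wavenumber --- this is how the blow-up of the time derivatives of $v$ as $\tilde t\to0$ is controlled. \emph{(ii) Amplification product.} From $\log|g_{m}|\le-\frac{2}{1+\beta_{m}}$ when $\beta_{m}\ge1$ and $\log|g_{m}|\approx-2\beta_{m}$ when $\beta_{m}\le1$, summing the resulting harmonic-type series yields $\prod_{m=n+1}^{N}|g_{m}(\xi)|\lesssim N^{-1/(\lambda^{2}\sin^{2}(\xi h/2))}$ for the relevant $n$ --- logarithmically slow, but genuine, decay, whereas each individual factor only tends to $-1$. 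Combining (i) and (ii), $|\widehat{e}^{\,N}(\xi)|\lesssim \lambda^{2}\sin^{2}(\xi h/2)\,N^{-1/(\lambda^{2}\sin^{2}(\xi h/2))}$ on the high band; this is largest at the Nyquist frequency $\xi h=\pi$, where the exponent is $1/\lambda^{2}$, and it is concentrated there in a peak of width $O(1/\sqrt{\log N})$ in $\xi h$, so that, inserting $N=\tilde t_{N}/k\asymp 1/h$, the high band contributes $O(h^{1/\lambda^{2}})$ up to a logarithmic factor. Adding the two bands gives $\|e^{N}\|_{\ell^{\infty}}=O\bigl(h^{\min(2,\,1/\lambda^{2})}\bigr)$, which is Theorem~\ref{T:MainTheorem}; the threshold $\lambda=1/\sqrt2$ appears exactly as the value at which the harmonic-sum-driven high-frequency power $h^{1/\lambda^{2}}$ ceases to beat the $O(h^{2})$ interpolation barrier.

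I expect the main obstacle to be step (ii) and its interplay with the singularity at $\tilde t=0$: for the high wavenumbers the amplification factors do not damp at all in the limit --- precisely the mechanism behind the divergence of the untransformed scheme --- and it is only the linear growth $\beta_{n}\propto(2n-1)$, a direct consequence of the vanishing diffusivity produced by the time change, that forces the product to decay. Turning this into a sharp, uniform-in-$\xi$ bound (so as to pin down the exponent $1/\lambda^{2}$, equivalently the constant $1/\sqrt2$) requires careful tracking of constants through the harmonic sum and through the Laplace-type integral over the Nyquist peak, together with a clean accounting of the first $O(1/\lambda)$ ``active'' steps, on which the truncation residual is $O(1)$-sized per step and the derivatives of $v$ are large.
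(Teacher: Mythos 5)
Your overall strategy -- pass to the Fourier domain, treat the low band by Taylor expansion of $\log$ of the amplification factors to get the classical $O(h^2)$, and show that on the high band the product of amplification factors decays like a power of $N$ coming from the harmonic sum $\sum_n 1/(n\xi)$, with the dominant contribution concentrated in a Nyquist peak of width $O(1/\sqrt{\log(1/h)})$ -- is exactly the paper's; the paper simply works directly with the exact product $\widehat U^N(s)=\prod_{m=0}^{N-1}(1-m\xi)\big/\prod_{m=1}^{N}(1+m\xi)$, $\xi=2\lambda^2\sin^2(sh/2)$, rather than through your Duhamel sum of truncation residuals, which is a presentational rather than substantive difference. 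There is, however, a genuine gap in your high-band accounting, and it sits exactly where the constant $1/\sqrt{2}$ is decided. Your pointwise bound is $|\widehat e^{\,N}(\xi)|\lesssim \lambda^2\sin^2(\xi h/2)\,N^{-1/(\lambda^2\sin^2(\xi h/2))}\asymp h^{2/\xi}$, equal to $h^{1/\lambda^2}$ at Nyquist. The maximum-norm error is bounded by $\frac{1}{2\pi}\int|\widehat e^{\,N}(s)|\,ds$, and the Nyquist peak has width $O\bigl(1/(h\sqrt{\log(1/h)})\bigr)$ in $s$; multiplying these, your own estimates deliver a high-band contribution of order $h^{1/\lambda^2-1}/\sqrt{\log(1/h)}$, not $h^{1/\lambda^2}$, hence a threshold $\lambda\le 1/\sqrt{3}$ rather than $1/\sqrt{2}$.

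The missing power of $h$ is real and traces back to which scheme you analyse. The paper evaluates the coefficient at $\tilde t^{\,n}=nk$ on the explicit side and $\tilde t^{\,n+1}$ on the implicit side, so the numerator and denominator index ranges in the exact product are offset by one; after pairing $(1-m\xi)/(1+m\xi)$, which produces the $N^{-2/\xi}$ decay, there remains an unpaired factor $(1+N\xi)^{-1}\sim h/(\lambda\xi)$ (equivalently: since $\tilde t^{\,0}=0$, the first step is effectively fully implicit). This gives the pointwise bound $S(\xi)\,h^{1+2/\xi}$, whose integral over the Nyquist peak is $O(h^{1/\lambda^2}/\sqrt{\log(1/h)})$, matching the theorem. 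With the midpoint sampling $\tilde t_{\,n-1/2}$ you chose, the product $\prod_{n=1}^{N}\frac{1-(n-1/2)\xi}{1+(n-1/2)\xi}$ is fully paired, the extra factor is absent, and the high-band transform genuinely is only $O(h^{2/\xi})$; so you must either switch to the paper's sampling and track that factor, or accept the weaker exponent $\min(2,\,1/\lambda^2-1)$. A secondary, more routine issue: your low band cannot be stretched up to the first zero of the symbol, since the remainder of the Taylor expansion of $\log g_n$ is only $O(s^8h^3)$ and requires $s\lesssim h^{-r}$ with $r<1/3$; you need the paper's intermediate regime ($h^{-r}\lesssim s\lesssim s_N$), handled by monotonicity of the positive factors, where the transform is $o(h^q)$ for every $q$.
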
 

The peculiar dependence of the convergence rate on the mesh ratio is in fact seen to be sharp up to a logarithmic factor. It also follows from the analysis of the heat equation that the computational complexity for an optimal choice of the mesh ratio is lower than for the Rannacher scheme in its optimal refinement regime.

As an added benefit, we obtain experimental second order convergence for the value, delta and gamma of European and American options.
%Forsyth and Vetzal observe in  
\citet{FV} observe that at-the-money prices of European and American options computed by a Crank-Nicolson finite difference scheme exhibit a reduced convergence order of 1 in the time step $k$, which improves to 2 for Rannacher start-up in the case of European options, but only to 3/2 for American options.
This last observation is rationalised
% in \citet{FV} 
by the square-root behaviour of the value and exercise boundary for short time-to-expiry. A heuristic adaptive time-stepping scheme based on this observation is shown there to restore second order convergence.
We show here, by means of numerical tests, that the square root time change introduced above provides a similar remedy even without Rannacher start-up. This is not surprising bearing in mind the relation between the time change and a non-uniform time mesh in the original time variable.
Precisely,
%\begin{remark}
the time-changed scheme with constant steps of size $\sqrt{T}/N$, where $N$ is the total number of time steps for time up to $T$,
is equivalent to a non-uniform scheme in the original time variable with time points $t_m=(km)^2$ for $m=0,...,\sqrt{T}/k$.
The step size hence increases in proportion to $2 \sqrt{t}$ and the smallest (the first) step is of size $T/N^2$.

% exhibits second order convergence if the ratio $\lambda = k/h$ is held constant and is less than or equal to $1/\sqrt{2}$.
%\end{remark}

%\indent 
The remainder of this article is organised as follows. In Section~\ref{S:CNScheme}, we apply the time change transformation to the heat equation, describe the transformed numerical scheme, and calculate the discrete Fourier transform of the numerical solution.
%\indent 
Then, in Section~\ref{S:AsympFT}, we perform asymptotic analysis of the error between the transform and the transform of the true solution, identifying four wave number regimes which will then be used in the later analysis.
%\indent 
In Section~\ref{S:AsympErrors}, we use the results of the asymptotic analysis to determine the asymptotic behaviour of the errors between the numerical solution and the true solution, from which we deduce the convergence behaviour of the transformed scheme and hence Theorem~\ref{T:MainTheorem}.
%\indent 
Section~\ref{sec:numerical} contains numerical results illustrating these findings and compares the computational complexity to the Rannacher scheme.
In Section~\ref{S:BSApp}, the time change transformation is applied to the Black-Scholes equation and the solution method is used to calculate the gamma of a European call option. The behaviour of the resulting errors is described and explained in terms of the values of the strike and volatility.
%\indent 
Finally, in Section~\ref{S:AmericPut}, the time change transformation is applied to a penalty method used to calculate the price of an American put, as described in \citet{FV}.
%, where uniform time-stepping does not achieve quadratic convergence for the at the money (ATM) option value. Although the authors implemented an adaptive time-stepping method to achieve quadratic convergence for the option price, but we demonstrate that the simpler time change method also exhibits quadratic convergence for the option value. Furthermore, for sufficiently small $\lambda$, quadratic convergence is also observed for the option delta and gamma. 
Section~\ref{sec:concl} concludes.
\section{The Crank-Nicolson scheme applied to the transformed heat equation and its behaviour in the Fourier domain}\label{S:CNScheme}
To simplify the analysis, we restrict attention to the heat equation (i.e., we do not consider any convection terms)
\begin{equation}
u_t= \frac{1}{2} \, u_{xx},
\label{HE}
\end{equation}

\noindent for $t \in [0,T]$, $x \in \mathbb{R}$, where the solution, $u$, satisfies $u(x,0)=\delta(x)$, where $\delta$ is the Dirac delta function and which, after the change of time variable, becomes
\[u_{\tilde t}= \tilde t\: u_{xx},\]
\noindent where $\tilde t = \sqrt{t}$.

\noindent The transformed equation is then discretised using the Crank-Nicolson method, leading to the following numerical scheme

\begin{equation}
%\begin{multiline}
\frac{U^{n+1}_j-U^{n}_j}{k}=\frac{1}{2}\: \tilde t^{n}\:\frac{U^{n}_{j+1}-2U^{n}_j+U^{n}_{j-1}}{h^2}+\frac{1}{2}\: \tilde t^{n+1}\:\frac{U^{n+1}_{j+1}-2U^{n+1}_j+U^{n+1}_{j-1}}{h^2},
\label{EQ1}
%\end{multiline}
\end{equation}

\noindent where $U^{n}_j$ is the numerical solution at the spatial grid node $j$ at the `time' step $n$, where the space grid, in principle, extends from $-\infty$ to $+\infty$, with $x_j=jk$ for $j \in \mathbb{Z}$. In the transformed `time' direction, we have  $\tilde t^i=ik$ for $1 \leq i \leq N$ where $N$ is the number of timesteps. We can simplify this to
\[(1+(n+1)\lambda^2)U^{n+1}_j-\frac{1}{2}(n+1)\lambda^2U^{n+1}_{j+1}-\frac{1}{2}(n+1)\lambda^2U^{n+1}_{j-1}=(1-n\lambda^2)U^{n}_j+\frac{1}{2}n\lambda^2U^{n}_{j+1}+\frac{1}{2}n\lambda^2U^{n}_{j-1}, \]

\noindent where $\lambda=k/h.$

To study the behaviour of this scheme in the Fourier domain we apply the discrete Fourier transform to this equation (e.g., see \citet{GC} and \citet{STRANG}). 
%The discrete Fourier transform for a function $U$ defined at uniformly spaced grid points, $x_j=jh$ where $j \in \mathbb{Z}$, is given by
%\[\widehat{U}(s)=h \sum ^{j=+\infty}_{j=-\infty} U_j e^{i s x_j}\]
%\noindent and has the argument, $s$, called the wave number. So, for $1 \leq n \leq N$, we define
Multiplying equation~\eqref{EQ1} by $e^{i s x_j}$, summing over $j$ and simplifying, we obtain the following recurrence for the Fourier transform 
%\begin{equation}
%\label{EQ1}
\[\widehat{U}^n(s)=h \sum ^{j=+\infty}_{j=-\infty} U_j^n e^{i s x_j}\]
%\end{equation}
at successive time steps,
\[\left(1+2(n+1)\lambda^2 \sin^2\left(\frac{sh}{2}\right)\right)\widehat{U}^{n+1}(s)=\left(1-2n\lambda^2 \sin^2\left(\frac{sh}{2}\right)\right)\widehat{U}^{n}(s).\]
We note that $\widehat{U}^0(s)\equiv1$, as the initial condition is a delta function at $(0,0)$, so we have
\begin{equation}
\label{EQ2}
\widehat{U}^{N}(s)=\frac{1(1-\xi)(1-2\xi) \ldots (1-(N-1)\xi)}{(1+\xi)(1+2\xi) \ldots (1+N\xi)},
\end{equation}

\noindent where $\xi = 2 \lambda^2 \sin^2(sh/2)$ and we want to study the behaviour of $\widehat{U}^{N}(s)$ as $N \rightarrow \infty$ with $kN$ and $k/h$ held fixed.\
\newline 
\indent In the subsequent analysis, it is often simpler to describe a condition on $s$, by stating the related condition on $\xi$, but it should be noted that the relationship between $s$ and $\xi$ depends on $h$.\

\indent In what follows, and to simplify the notation, we will fix $T=1$, so that
\[N=\frac{1}{k}=\frac{1}{h \lambda}.\] 
We note that the exact solution of the PDE is given by

\[u(x,t)=\frac{1}{\sqrt{2 \pi t}}\exp\left(-\frac{x^2}{2t}\right) \]

\noindent and its Fourier transform is given by $\widehat{u}(s,t)=\exp\left(-s^2 t/2\right)$. So, for                       $t=\tilde t=1$, we have $\widehat{u}(s,1)=\exp\left(-s^2/2\right)$.

As in \citet{GC} we can estimate the errors in the numerical solution (at time $t=\tilde t=1$), i.e. the differences between the values of $U^{N}_j$ and  $u(x_j,1)$, by applying the inverse Fourier transform to $\widehat{U}^N(s)-\widehat{u}(s,1)$.

%The inverse Fourier transform is a function $\widehat{U}(s)$ which transforms a continuous function to an equally spaced set of grid values and is defined by
%
%\[U_j=\frac{1}{2\pi}\int ^{s=\frac{\pi}{h}}_{s=-{\frac{\pi}{h}}} \widehat{U}(s) \exp(-is x_j) \;ds\]
%
%\noindent for $j \in \mathbb{Z}$.

%%%%%%%%%%%%%%%%%%%%%%%%%%%%%%%%%%%%%%%%%%%%%%%%%%%%%%%%%%%%%%%%%%%%%%%%%%%%%%%%%%%%%%%%%%
%%%%%%%%%%%%%%%%%%%%%%%%%%%%%%%%%%%%%%%%%%%%%%%%%%%%%%%%%%%%%%%%%%%%%%%%%%%%%%%%%%%%%%%%%%
\section{Asymptotic analysis of the Fourier transform}\label{S:AsympFT}
%%%%%%%%%%%%%%%%%%%%%%%%%%%%%%%%%%%%%%%%%%%%%%%%%%%%%%%%%%%%%%%%%%%%%%%%%%%%%%%%%%%%%%%%%%
%%%%%%%%%%%%%%%%%%%%%%%%%%%%%%%%%%%%%%%%%%%%%%%%%%%%%%%%%%%%%%%%%%%%%%%%%%%%%%%%%%%%%%%%%%

In this section we identify four wave number regimes for the Fourier transform and obtain useful asymptotic estimates for $\widehat{U}^N(s)$, in each of these regimes. Initially, we start by identifying three regimes, the low, intermediate and high wave number regimes (as in \citet{GC}), but then find it useful to subdivide the high wave number regime into two further regimes. 
\newline
\indent In essence, the wave number regimes are determined by the locations of the real zeros of the equation

\[\widehat{U}^{N}(s)=0,\]

\medskip
\noindent which are at $s_m$, for $m=m^*,\ldots,N-1$, for some integer $m^* \geq 1$, which depends only on $\lambda$, where
\[2 \lambda^2 \sin^2\left(\frac{s_mh}{2}\right)=\frac{1}{m}.\]
\noindent and we note that $m^* \geq \frac{1}{2\lambda^2}$.
Note that each $s_m$ is a function of $h$ and $s_m \rightarrow \infty$ as $h \rightarrow 0$, so these wave number boundaries increase in absolute terms while always being less than or equal to $\pi/h$.

The low wave number regime (which we will refer to as regime I) is, in part, defined by the requirement that all the terms in the numerator of the expression for $\widehat{U}^{N}(s)$ in equation (\ref{EQ2}) should be positive. This requires that $\xi < 1/(N-1)$. If we assume the stronger condition that $\xi < 1/N$, we can write a truncated Taylor expansion for

\[\log(\widehat{U}^N(s))=\sum^{N-1}_{m=1} \log(1-m\xi) - \sum^{N}_{m=1} \log(1+m\xi) \]

\noindent from which we can later derive an asymptotic estimate for $\widehat{U}^N(s)$.

As in \citet{GC}, the low wave number regime is further restricted by the condition that $s<h^{-r}$, for a value $r$ such that  $r<\frac{1}{3}$, which ensures that the remainder terms in the Taylor expansion go to zero as $h \rightarrow 0$, so that we can derive a useful approximation for $\widehat{U}^N(s)-\widehat{u}(s,1)$. As this condition also ensures that $\xi < 1/N$ asymptotically, this condition alone is sufficient to define the low wave number regime.

Next, there exists an intermediate wave number regime (regime II) defined by the range of wave numbers for which $\xi < 1/N$ but which are not in regime I (cf. \citet{GC}).

The high wave number regime is defined by the requirement that $\xi \geq 1/N$. If this condition on $\xi$ is satisfied then some of the terms in the numerator of the expression for $\widehat{U}^{N}(s)$ in equation (\ref{EQ2}) will be negative. There will be an integer $m > 0$ such that $\xi \in [1/(m+1),1/m]$ and the number of positive terms will remain fixed as $N$ increases while the number of negative terms will increase with $N$. A particular negative factor $(1-r\xi)$ can be rewritten as $-r\xi(1-1/r\xi)$, with $1/r\xi<1$, and we can then write a truncated Taylor expansion for $\widehat{U}^{N}(s)$ in terms of $1/\xi$ which will then lead to an asymptotic value for $\widehat{U}^{N}(s)$.

At first sight, it might seem necessary to treat separately all the $\xi$-intervals of the form, $[1/(m+1),1/m]$, giving rise to an ever-growing set of wave number regimes but, in fact, it suffices for the results we wish to prove, to treat the last of these intervals, $[1/m^*,2\lambda^2]$, separately and combine all the other intervals into a single regime.

So we define wave number regime III to correspond to the interval $[1/N,1/m^*]$ and wave number regime IV to correspond to the interval $[1/m^*,2\lambda^2]$. These ranges are equivalent to $s_N \leq s \leq s_{m^*}$ and $s_{m^*} < s \leq \pi/h$ respectively. We will obtain a uniform bound on the magnitude of $\widehat{U}^{N}(s)$ for $\xi \in [1/(m+1), 1/m]$ in regime III and an asymptotic value for the magnitude of $\widehat{U}^{N}(s)$ in the regime IV.

Table ~\ref{tab:TableReg} summarises the wave number regimes giving their boundaries in terms of both $\xi$ and $s$. (Only the regimes for positive wave numbers are shown. By symmetry, the regimes for the negative wave numbers are just the negatives of these intervals). 
\begin{table}[thb]
\centering
{
\renewcommand{\arraystretch}{1.5}
\begin{tabular}{|l|l|l|l|l|}
\hline
\multicolumn{5}{|c|}{Wave number regimes}\\
\hline
Regime & $\xi_{min}$ & $\xi_{max}$ & $s_{min}$ & $s_{max}$ \\ \hline
\ I & 0 & $\xi_r $ & 0 & $ h^{-r} $ \\
\ II & $\xi_r$ & $\frac{1}{N}$ & $ h^{-r} $ & $s_{N}$ \\
\ III & $\frac{1}{N}$ & $\frac{1}{m^*}$ & $s_{N}$ & $s_{m^*}$ \\
\ IV & $\frac{1}{m^*}$ & $2\lambda^2$ & $s_{m^*}$ & $\frac{\pi}{h}$ \\ \hline
\end{tabular}\\
}
\medskip
\caption{Limits of wave number regimes in terms of $\xi$ and $s$.
Here, $r<{1}/{3}$, $\xi_r=2\lambda^2\sin^2({h^{1-r}}/{2})$, $s_n$ are defined by ${1}/{n}=2\lambda^2\sin^2({s_n h}/{2})$ for $n=N$ and $n=m^*$,
and $m^*\ge 1$ is the smallest $n$ for which there exists such $s_n$}
\label{tab:TableReg}
\end{table}
The results of carrying out asymptotic expansions in the four wave number regimes are summarised in the following Lemma which gives the behaviour of the error between the two transforms, $\widehat{E}(s)=\widehat{U}^N(s)-\widehat{u}(s,1)$.

\begin{lemma}\label{L:LemmaA}

The following formulae give the errors in the Fourier transform in each wave number regime (see Table \ref{tab:TableReg}).\

\medskip 

\begin{enumerate}[leftmargin=1cm,rightmargin=0cm]

\item
\noindent In wave number regime I,

\[\widehat{E}(s)=\widehat{u}(s,1)\left(\frac{1}{24}s^4-\frac{1}{48}\lambda^2 s^6+\frac{1}{8}\lambda^2 s^4\right)h^2(1+o(1)).\]\\
\item
\noindent In wave number regime II,
\[\widehat{E}(s)=o(h^q)\]

\noindent for any $q > 0$.

\medskip 
\item
\noindent In wave number regime III,

\[|\widehat{E}(s)| \leq	\frac{((m+1)!)^2(N-m-1)!}{(2m+2)(N+m)!}\]

\noindent whenever $\xi=2\lambda^2\sin^2(\frac{sh}{2}) \in [1/{(m+1)},1/{m}]$ and, everywhere in this regime, %for sufficiently large $N$, %a uniform bound in this wave number regime is given by
\[
\widehat{E}(s) = O(h^{2m^* + 1})
\]
for some $m^*\ge 1$.
%\[\frac{((m^*+1)!)^2}{(2m^*+2)}\frac{(N-m^*-1)!}{(N+m^*)!}.\] 

\item
\noindent Finally, in wave number regime IV, there exists a positive, continuous, and hence bounded, function $S(\cdot,m^*)$ on $[1/m^*,2\lambda^2]$, such that 

\[|\widehat{E}(s)| = S(\xi,m^*)h^{(1+\frac{2}{\xi})}(1 + O(h))). \]

\end{enumerate}

\end{lemma}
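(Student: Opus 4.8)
The plan is to read everything off the closed product formula~\eqref{EQ2} for $\widehat U^N(s)$ and the exact Gaussian $\widehat u(s,1)=e^{-s^2/2}$, treating the four $\xi$-regimes of Table~\ref{tab:TableReg} by two complementary devices: a logarithmic Taylor expansion in regimes I and II (where every factor of~\eqref{EQ2} is positive, so $\log\widehat U^N(s)$ makes sense), and an exact rewriting of the finite products as ratios of $\Gamma$-functions in regimes III and IV (where some numerator factors turn negative). One small observation is used throughout: $e^{-s^2/2}$ is smaller than any fixed power of $h$ as soon as $s\gtrsim h^{-\epsilon}$, and in particular on all of regimes II, III and IV (there $\xi\ge 1/N$ forces $s^2\gtrsim 1/(\lambda h)$, and on regime IV $\sin^2(sh/2)$ is bounded away from $0$); hence on those regimes $\widehat E(s)$ and $\widehat U^N(s)$ agree up to a super-polynomially small term, and only in regime~I is the Gaussian prefactor retained.

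\emph{Regimes I and II.} Since $s<h^{-r}$ with $r<\tfrac13$ forces $\xi<1/N$ asymptotically, I expand $\log\widehat U^N(s)=\sum_{m=1}^{N-1}\bigl[\log(1-m\xi)-\log(1+m\xi)\bigr]-\log(1+N\xi)$, use the odd-power series $\log(1-x)-\log(1+x)=-2x-\tfrac23 x^3-\cdots$ and $\sum_{m=1}^{N-1}m=\tfrac12 N(N-1)$, $\sum_{m=1}^{N-1}m^3=\bigl(\tfrac12 N(N-1)\bigr)^2$. The $O(\xi N)=O(s^2h)$ part of the sum cancels $-N\xi$ from the leftover term, so the first nonzero correction is $O(h^2)$; substituting $\xi=\tfrac12\lambda^2 s^2h^2-\tfrac1{24}\lambda^2 s^4h^4+\cdots$ and $N=1/(h\lambda)$ and collecting the $h^2$ terms gives $\log\widehat U^N(s)+\tfrac12 s^2=\bigl(\tfrac1{24}s^4-\tfrac1{48}\lambda^2 s^6+\tfrac18\lambda^2 s^4\bigr)h^2+R$, with remainder $R=O(\lambda^{-6}s^{10}h^4)$ — this bounds both the tail of the logarithmic series and the gap between the power sums and their leading monomials — so $R$ is $o(h^2)$ relative to the bracket uniformly on $s<h^{-r}$, precisely because $r<\tfrac13$. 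Exponentiating (the exponent tends to $0$ uniformly) yields statement~(1). For regime~II, $\widehat u(s,1)\le e^{-h^{-2r}/2}=o(h^q)$ for every $q$, and since the factors of~\eqref{EQ2} are still positive, $\log\widehat U^N(s)\le-\xi N(N-1)\le-\tfrac12\xi N^2\le-\tfrac14 h^{-2r}(1+o(1))$ using $\xi\ge\xi_r\sim\tfrac12\lambda^2 h^{2-2r}$; hence $\widehat U^N(s)=o(h^q)$ too and $\widehat E(s)=o(h^q)$, which is statement~(2).

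\emph{Regimes III and IV.} For $\xi\in[1/(m+1),1/m]$ the numerator of~\eqref{EQ2} has $m$ positive and $N-1-m$ negative factors, and with $a:=1/\xi\in[m,m+1]$ one has $|\widehat U^N(s)|=\dfrac{a}{a+N}\prod_{j=1}^{m}\dfrac{a-j}{a+j}\prod_{j=m+1}^{N-1}\dfrac{j-a}{j+a}$. In regime~III ($m\ge m^*$) I bound this factor by factor using $m\le a\le m+1$, pairing the two factors $\dfrac{a-m}{a+m}$ and $\dfrac{m+1-a}{m+1+a}$ that straddle $a$ and using $(a-m)(m+1-a)\le\tfrac14$; the remaining products telescope into ratios of factorials and, after absorbing harmless factors $<1$, produce $|\widehat E(s)|\le\dfrac{((m+1)!)^2(N-m-1)!}{(2m+2)(N+m)!}$, whose decisive feature is $\dfrac{(N-m-1)!}{(N+m)!}\sim N^{-(2m+1)}$, so that (since $m\ge m^*$ and $h\asymp 1/N$) the bound is $O(h^{2m^*+1})$ uniformly over the regime. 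In regime~IV ($\xi\in(1/m^*,2\lambda^2]$, so $a=1/\xi\in(m^*-1,m^*)$) I keep the exact value: rewriting the three products via $\prod_{j=p}^{q}(j+c)=\Gamma(q+1+c)/\Gamma(p+c)$ and using Euler's reflection formula $\Gamma(a-m^*+1)\Gamma(m^*-a)=\pi/\sin(\pi(a-m^*+1))$ gives $|\widehat U^N(s)|=\dfrac{a^2\Gamma(a)^2\sin(\pi(a-m^*+1))}{\pi}\cdot\dfrac{\Gamma(N-a)}{\Gamma(N+1+a)}$, and the standard ratio asymptotic $\Gamma(N-a)/\Gamma(N+1+a)=N^{-(1+2a)}(1+O(1/N))$ together with $N=1/(h\lambda)$ yields $|\widehat U^N(s)|=S(\xi,m^*)\,h^{1+2/\xi}(1+O(h))$ with $S(\xi,m^*):=\pi^{-1}a^2\Gamma(a)^2\sin(\pi(a-m^*+1))\lambda^{1+2/\xi}$, which is continuous and bounded on $[1/m^*,2\lambda^2]$ and strictly positive for $\xi>1/m^*$ (there $a-m^*+1\in(0,1)$). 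Adding back the super-polynomially small Gaussian gives statements~(3) and~(4).

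\emph{Main obstacle.} The genuinely delicate step is regime~I: one must (i) notice the exact cancellation of the $O(h)$ contributions and (ii) show that \emph{every} remainder — the logarithmic tail, the sub-leading parts of $\sum m$ and $\sum m^3$, and the higher-order terms in the expansion of $\xi$ in powers of $sh$ — is $o(h^2)$ relative to the $h^2$-bracket, \emph{uniformly} over the $h$-dependent window $s<h^{-r}$; this uniform control is exactly what forces the restriction $r<1/3$. In regimes~III and~IV the non-routine points are the passage from the finite products to $\Gamma$-functions and the uniformity (over the compact $\xi$-interval) of the $\Gamma$-ratio and factorial asymptotics; in regime~III one also checks that the factorial bound decreases enough in $m$ that its value at $m=m^*$ governs the whole regime.
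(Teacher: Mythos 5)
Your proposal is correct in substance, and it splits naturally into a part that mirrors the paper and a part that is genuinely different. In regimes I and II you follow essentially the paper's route: a logarithmic expansion of the product in (\ref{EQ2})--(\ref{EQ3}) with uniform remainder control on the window $s<h^{-r}$, $r<1/3$ (the paper organises the remainder via a Lagrange bound on the fourth derivative of $\log g(\delta)$, you via the odd-power series and the power sums --- same content; note your single remainder form $O(s^{10}h^4)$ omits some $O(s^6h^3)$ cross terms, e.g.\ from $-\tfrac13 N^3\xi^3$ and the $s^4h^4$ correction in $\xi$, but these are still $o(h^2)$ relative to the bracket, so nothing breaks), and in regime II your direct bound $\log\widehat U^N(s)\le-\xi N(N-1)$ with $\xi\ge\xi_r$ is a slightly more self-contained variant of the paper's argument, which uses monotonicity of each factor in $s$ and evaluates at $s=h^{-r}$. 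In regime III your bound is the paper's $W_{N,m}$, and the clean way to get it is exactly the paper's: evaluate the nonnegative block at $\xi=1/(m+1)$ and the negative block at $\xi=1/m$; your pairing device with $(a-m)(m+1-a)\le\tfrac14$ is unnecessary and, taken literally, yields a slightly weaker constant. Where you genuinely diverge is regime IV: the paper expands $\log|\widehat U^N|$ in powers of $1/\xi$, invokes $\sum_{k\le N}1/k=\log N+\gamma+O(N^{-1})$, and spends half of its appendix bounding the tail series $A,B,C,D$, ending with an implicit $S(\xi,m^*)$; your rewriting of the exact product as Gamma ratios, Euler's reflection formula, and the uniform asymptotic $\Gamma(N-a)/\Gamma(N+1+a)=N^{-1-2a}(1+O(1/N))$ reach the same conclusion more quickly and give a closed-form $S(\xi,m^*)=\pi^{-1}a^2\Gamma(a)^2\sin(\pi(a-m^*+1))\lambda^{1+2/\xi}$, with uniformity over the compact $a$-range standard. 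The only caveats are ones already implicit in the paper: $S$ vanishes at the endpoint $\xi=1/m^*$ (so ``positive'' holds on the open end), and in regimes II--IV one absorbs the exponentially small Gaussian $\widehat u(s,1)$ into the error, as you do.

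One place where your write-up is thinner than the paper and genuinely needs the argument you only gesture at: in regime III, passing from $|\widehat E(s)|\le W_{N,m}$ to the uniform $O(h^{2m^*+1})$ is not immediate from $(N-m-1)!/(N+m)!\sim N^{-(2m+1)}$, because the prefactor $((m+1)!)^2/(2m+2)$ grows without bound as $m$ ranges up to $N-1$, and $W_{N,m}$ is not monotone in $m$. The paper settles this by computing $W_{N,m-1}/W_{N,m}=(N^2-m^2)/(m(m+1))$, concluding that the maximum over $m\in\{m^*,\dots,N-1\}$ is attained at $m=m^*$ or $m=N-1$, and then using Stirling to show $W_{N,N-1}\ll W_{N,m^*}=O(N^{-2m^*-1})$. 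Your sentence that ``one also checks that the factorial bound decreases enough in $m$'' is precisely this step; it must be carried out (via the ratio argument or a Stirling estimate of $W_{N,m}$ uniform in $m$), since without it the claimed uniform bound over the whole regime is unproven.
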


\begin{proof} The key features of the proof are given here while further details are included in the Appendix.\\

\indent We first consider wave number regime I.\
Inspection of equation \eqref{EQ2} shows that all the terms in the numerator are positive if $\xi \leq 1/N<1/(N-1)$. We can then write $\log(U^N(s))$ as
\begin{equation}
\label{EQ3}
\log(\widehat{U}^N(s))=\sum^{N-1}_{k=1} \left(\log(1-k\xi) - \log(1+k\xi)\right) - \log(1+N\xi)
\end{equation}

\noindent and expand the component terms, $\log(1 \pm k\xi)$, by first expanding $\xi$ in powers of $h$. The calculations are very similar to those in \citet{GC} and the details of the expansions are left to the Appendix.
% \ref{app:exp}.
We find that

\begin{equation}
\label{expapp}
\log(\widehat{U}^N(s))=-\frac{1}{2}s^2 +\left(\frac{1}{24}s^4-\frac{1}{48}\lambda^2 s^6+\frac{1}{8}\lambda^2 s^4\right)h^2+O(s^8h^3).
\end{equation}

%By reasoning similar to that in \citep{GC} 
\noindent we find we require that the second term converges to zero as $h \rightarrow 0$ and the last term is asymptotically dominated by the second term. These conditions are satisfied by requiring that $s<h^{-q}$ with $q < \frac{1}{3}$ and this condition on $s$ defines the upper limit of the wave number regime I.
%Again, similar reasoning to that in \citep{GC} shows that, as $h \rightarrow 0$, we can retain just the first two terms of $\log(\widehat{U}^N(s))$ so long as we ensure that $s<h^{-q}$ for a number $q$ such that $q < \frac{1}{2}$. We also require that the terms in $(\frac{1}{24}s^4-\frac{1}{48}\lambda^2 s^6+\frac{1}{8}\lambda^2 s^4)h^2$ tend to zero as $h \rightarrow 0$ and find that we need $s<h^{-q}$ with $q < \frac{1}{3}$.

%We conclude that the latter condition on $s$ defines the upper limit of the wave number regime I and 
We then have, with $\widehat{u}(s,1)=e^{-\frac{1}{2}s^2}$,

\[\widehat{U}^N(s)-\widehat{u}(s,1)=\widehat{u}(s,1)\left(\frac{1}{24}s^4-\frac{1}{48}\lambda^2 s^6+\frac{1}{8}\lambda^2 s^4\right)h^2(1+o(1))\]

\noindent as we set out to prove.\\

\indent We now consider wave number regime II.\\

\noindent The analysis is again similar to that in \citet{GC}. The lower limit of wave number regime II is $O(h^{-\frac{1}{3}})$ (which corresponds to the upper limit of wave number regime I) and the upper limit of the regime corresponds to the first zero of $\widehat{U}^N(s)$, i.e., $s=(2/h)\sin^{-1}\sqrt{(1/2 \lambda^2(N-1))}$, which is $O(h^{-\frac{1}{2}})$. In this regime each of the factors $(1-m\xi)/(1+(m+1)\xi)$ in $\widehat{U}^N(s)$ is positive and decreasing as $s$ increases.\\ 
\indent Consequently, the value of $\widehat{U}^N(s)$ will be less than or equal to its value at $s=h^{-r}$ for $0<r<1/3$.
However, at $s=h^{-r}$ we have, for any $q > 0$,

\[\frac{\widehat{U}^N(s)}{h^q}= \frac{   e^{  -\frac{1}{2} h^{-2r}}   }{h^q} \exp{\left(h^{2(1-2r)}\left(\frac{1}{24}+\frac{\lambda^2}{8}\right)-h^{2(1-3r)}\frac{1}{48}\lambda^2+ o(1)\right)}\rightarrow 0\]
\noindent as $h \rightarrow 0$, so $\widehat{U}^N(s)=o(h^q)$ at $s=h^{-\frac{1}{3}}$
and so we have $\widehat{U}^N(s)=o(h^q)$ throughout this regime.\\

\indent Next we consider the high wave number regime which is split into two parts.\\
\noindent\\
\indent It is in wave number regimes III and IV that our analysis differs significantly from that in \citet{GC}
where there are repeated factors in the corresponding expression for $\widehat{U}^N(s)$, which results in a simpler expansion in terms of $1/\xi$. In our case, $\widehat{U}^N(s)$ is a product of many more factors, so the analysis and expansion is more complicated. However, we can obtain our key results by breaking the high wave number regime into two parts and we will find that we only need to perform an expansion in regime IV.\\

For wave number regime III, which extends from $s_N=(2/h)\sin^{-1}\sqrt{(1/2\lambda^2 N)}$ to $s_{m^*}$, corresponding to $\xi$ running from $1/N$ to $1/m^*$, with $m^* \geq 1$, we investigate the behaviour of $\widehat{U}^N(s)$ for values of $s$ corresponding to values of $\xi$ in the intervals $[1/(m+1), 1/m]$, where $m^* \leq m \leq N-1$.
We write $\widehat{U}^N(s)$ as
\begin{eqnarray}
\label{uhatn}
\widehat{U}^N(s)= \left\{  \frac{ 1(1-\xi)(1-2\xi)\ldots(1-m\xi) }{ (1+\xi)(1+2\xi)\ldots(1+(m+1)\xi) } \right\} . \left\{\frac{ (1-(m+1)\xi)\ldots(1-(N-1)\xi) }{ (1+(m+2)\xi)\ldots(1+N\xi) }\right\}.
\end{eqnarray}
For the range of values of $\xi \in [1/(m+1), 1/m]$ being considered, the left-hand expression is a product of non-negative factors, $(1-\nu\xi)/(1+(\nu+1)\xi)$, for $\nu = 0,...,m$, each of which is decreasing in $\xi$, so the left-hand expression is bounded by the value of the expression evaluated at $\xi=1/(m+1)$, i.e., $((m+1)!)^2 /(2m+2)!$\\

Similarly, the right-hand expression is a product of factors $(1-\nu\xi)/(1+(\nu+1)\xi)$, for $\nu=m+1,...,N-1$,	 which are all negative and whose magnitude is increasing with $\xi$. So the magnitude of the right-hand expression is bounded by the magnitude of the expression evaluated at $\xi=1/m$, i.e. $(N-m-1)!(2m+1)!/(N+m)!$\\

Therefore, the magnitude of $\widehat{U}^N(s)$ for $1/(m+1) \leq \xi \leq 1/m$ is bounded by %the expression $W_{N,m}$, where

\begin{equation}
W_{N,m}= \frac{((m+1)!)^2(N-m-1)!}{(2m+2)(N+m)!},
\end{equation}

\noindent and we consider the behaviour of this expression as $m$ decreases from $N-1$ to $m^*$.
We have

\[\frac{ W_{N,m-1}}{ W_{N,m}}=\frac{N^2-m^2}{m(m+1)},\]

\noindent and this expression is less than one for

\[N^2-m^2 < m(m+1), \]

\noindent which corresponds approximately to $m > N/\sqrt{2}$. This implies that, as $m$ decreases from $N-1$, $W_{N,m}$ first decreases, reaching a minimum in the vicinity of $N/\sqrt{2}$. So the maximum of $|\widehat{U}^N(s)|$ will occur either at $m=N-1$ or at $m=m^*$.\\

We now compare the values of $W_{N,m}$ at these values of $m$, by calculating the ratio

\[\frac{ W_{N,m^*}}{ W_{N,N-1}}=\frac{((m^*+1)!))^2}{(2m^*+2)}\frac{(N-m^*-1)!}{(N+m^*)!}\frac{(2N)!}{(N!)^2}.        \]\\

\noindent By Stirling's formula, we have the approximation

\[\frac{(2N)!}{(N!)^2}\sim \frac{(2N)^{2N+\frac{1}{2}}e^{-2N}}{((N)^{N+\frac{1}{2}}e^{-N})^2}\sim \frac{2^{2N+\frac{1}{2}}}{N^{\frac{1}{2}}}    \]

\noindent and we see that the exponential term dominates the other powers of $N$ and $W_{N,m^*}/W_{N,N-1} \rightarrow \infty$ as $N \rightarrow \infty $. So, for large enough $N$, $W_{N,m}$ is maximised at $m=m*$ and it then follows that the expression

\[\frac{((m^*+1)!)^2}{(2m^*+2)}\frac{(N-m^*-1)!}{(N+m^*)!} = O(N^{-2m^*-1}) = O(h^{2m^*+1}) \]

\noindent gives a uniform bound on $|\widehat{U}^N(s)|$ in wave number regime III. \\
\noindent\\
\indent Finally, we consider wave number regime IV.\\
\noindent\\
In this regime, we will find that we must perform an expansion of $\widehat{U}^N(s)$ from (\ref{uhatn}) and we write

\[
\big|\widehat{U}^N(s)\big| = M(\xi,m^*)\left|\frac{ (1-(m^*+1)\xi)\ldots(1-(N-1)\xi) }{ (1+(m^*+1)\xi)\ldots(1+(N-1)\xi) }\right|(1+N\xi)^{-1},
\]

\noindent where

\[M(\xi,m^*) = \left|  \frac{(1-\xi)(1-2\xi)\ldots(1-m^*\xi) }{ (1+\xi)(1+2\xi)\ldots(1+m^*)\xi) } \right|, \]

%\noindent and where we note that $M(\xi,m^*)$ includes the factor $(1-m^*\xi)$ which is negative on $[1/m^*,2\lambda^2]$.\\

\noindent so

\begin{equation}
\label{secexp}
\log\big(\big|\widehat{U}^N(s)\big|\big)=\log(M(\xi,m^*))+\log\left\{\frac{(1-\frac{1}{\xi(m^*+1)})\ldots(1-\frac{1}{\xi(N-1)})} {(1+\frac{1}{\xi(m^*+1)})\ldots(1+\frac{1}{\xi (N-1)})}\right\}+\log((1+N\xi)^{-1})
\end{equation}

\noindent and we seek an expansion of the left-hand side of this equation in terms of $1/\xi$ to find, as described in the Appendix, that\

%\noindent Now
%
%\[\log\left\{\frac{(1-\frac{1}{\xi(m^*+1)})\ldots(1-\frac{1}{\xi(N-1)})} {(1+\frac{1}{\xi(m^*+1)})\ldots(1+\frac{1}{\xi (N-1)})}\right\}=\sum_{k=m^*+1}^{k=N-1} (\log(1-\frac{1}{k\xi}) -\log(1+\frac{1}{k\xi}))\]\\
%
%\noindent as $k \geq m^*+1$, we have $1/k\xi<1$ and we can expand the logarithmic terms to determine the asymptotic behaviour in this regime. Full details of the expansions are given 

%By expansion, detailed in Appendix, we find that
%\begin{eqnarray}
%\label{expandapp}
%\log((-1)^{N-m^*-1}\widehat{U}(s))=P(\xi,m^*)-B(\xi,N)-\left(\frac{2}{\xi}+1\right)\log(N)+O(N^{-1})
%\end{eqnarray}
%
%\noindent for certain functions $P(\xi,m^*)$ and $B(\xi,N) = O(N^{-2})$ and, recalling that $N=1/h\lambda$, we find
\begin{eqnarray}
\label{expandapp}
\big|\widehat{U}^N(s)\big|=S(\xi,m^*)h^{({2}/{\xi}+1)}(1+O(h)),
\end{eqnarray}
\newline
\noindent where $S$ is a function of $\xi$ and $m^*$, which is positive and continuous on $[1/m^*,2\lambda^2]$, as we wanted to show.

\end{proof}

%%%%%%%%%%%%%%%%%%%%%%%%%%%%%%%%%%%%%%%%%%%%%%%%%%%%%%%%%%%%%%%%%%%%%%%%%%%%%%%%%%%%%%%%%%%%%%
\section{Error contributions from the various wave number regimes}~\label{S:AsympErrors}
%%%%%%%%%%%%%%%%%%%%%%%%%%%%%%%%%%%%%%%%%%%%%%%%%%%%%%%%%%%%%%%%%%%%%%%%%%%%%%%%%%%%%%%%%%%%%%

%We now have the information required to derive the contributions of the four wave number regimes to the error between the numerical solution compared to the true solution to the heat equation.
%
\indent The error in the numerical scheme at $x_j$ is found by using the inverse Fourier transform

\[E_j=\frac{1}{2\pi}\int ^{s=\frac{\pi}{h}}_{s=-{\frac{\pi}{h}}} \widehat{E}(s) \exp(-is x_j) \; ds, \]

\noindent where $\widehat{E}(s)$ is the error in the Fourier transform. So decomposing the error, applying the inverse  Fourier transform %(\cite{STRANG}), 
and using symmetry, we can write
\[E_j=\sum_{i=1}^{4}  E^i_j,\]
\noindent where
\begin{eqnarray}
\label{eij}
E^i_j=\frac{1}{\pi}\int _{s \in I_i}\widehat{E}(s) \cos(s x_j) \; ds,
\end{eqnarray}

\noindent where $I_i$ is wave number region for $i\in \{I,II,III,IV\}$. % and $E^i=\max_jE^i_j$. 
We then have the following Lemma which makes use of the results from Lemma ~\ref{L:LemmaA} in Section 3.

\begin{lemma}\label{L:LemmaB}

With $E^i = (E_j^i)_{j\in \mathbb{Z}}$ defined in (\ref{eij}), the contribution to the error of the numerical scheme from the four wave number regimes is given by
\begin{eqnarray*}
\begin{array}{llll}
E^{I} &=& O(h^2), &\\
E^{II} &=& o(h^q) &\quad \mathit{  for}\: \mathit{any  } \: q>0, \\
E^{III} &=& O(h^{2m^*}) &\quad \mathit{ for}\: \mathit{some} \: m^*\geq 1, \\
E^{IV} &=& O\left(\frac{h^\frac{1}{\lambda^2}}{\sqrt{\log{\frac{1}{h}}}}\right).&
\end{array}
\end{eqnarray*}

%\begin{enumerate}[leftmargin=0.75cm,rightmargin=0cm]
%
%\item
%
%\noindent In wave number regime I,
%
%\[E^I=O(h^2).\]\
%
%\item
%
%\noindent In wave number regime II,
%
%\[E^{II}=o(h^q) \: \mathrm{  for}\: \mathrm{any  } \: q>0.\]\
%
%\item
%
%\noindent In wave number regime III,
%
%\[|E^{III}|=O(h^{2m^*}).\]\
%
%\item
%
%\noindent In wave number regime IV,
%
%\[E^{IV}=O\left(\frac{h^\frac{1}{\lambda^2}}{\sqrt{\log{\frac{1}{h}}}}\right),\]
%
%\noindent where we note that $m^* \geq 1$.
%
%\end{enumerate}

\end{lemma}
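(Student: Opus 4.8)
The plan is to estimate each of the four integrals $E^i_j = \frac{1}{\pi}\int_{s\in I_i}\widehat{E}(s)\cos(sx_j)\,ds$ in the maximum norm over $j$ by bounding $\sup_j |E^i_j| \le \frac{1}{\pi}\int_{I_i}|\widehat{E}(s)|\,ds$, and then inserting the pointwise bounds on $|\widehat{E}(s)|$ from Lemma~\ref{L:LemmaA}. For regime~I, I would substitute the estimate $\widehat{E}(s)=\widehat{u}(s,1)\big(\tfrac{1}{24}s^4-\tfrac{1}{48}\lambda^2 s^6+\tfrac{1}{8}\lambda^2 s^4\big)h^2(1+o(1))$; since $\widehat{u}(s,1)=e^{-s^2/2}$ decays faster than any polynomial, the polynomial prefactor times $\cos(sx_j)$ is absolutely integrable over all of $\mathbb{R}$, uniformly in $j$, so pulling out the $h^2$ gives $E^I=O(h^2)$ immediately (the integral over $I_I\subset\mathbb{R}$ only makes the bound smaller). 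For regime~II, the pointwise bound $\widehat{E}(s)=o(h^q)$ combined with the fact that $|I_{II}|=O(h^{-1/2})$ or at worst $O(h^{-1})$ gives $E^{II}=o(h^{q'})$ for any $q'>0$ after relabelling, since $h^{-1}\cdot o(h^q)=o(h^{q-1})$ and $q$ is arbitrary.

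For regime~III, I would use the uniform bound $\widehat{E}(s)=O(h^{2m^*+1})$ valid throughout the regime, together with $|I_{III}|\le \pi/h$, to get $E^{III}=O(h^{2m^*+1})\cdot O(h^{-1})=O(h^{2m^*})$. (One could also be more careful using the $\xi$-interval-wise bound $W_{N,m}$, but the crude product with the interval length already suffices for the claimed order.) The delicate regime is~IV, where $|\widehat{E}(s)|=S(\xi,m^*)h^{1+2/\xi}(1+O(h))$ with $S$ positive, continuous and bounded on $[1/m^*,2\lambda^2]$, and $\xi=2\lambda^2\sin^2(sh/2)$. Here the exponent $1+2/\xi$ is not constant: it ranges from $1+2m^*$ (at the left endpoint $\xi=1/m^*$) down to its minimum $1+\lambda^{-2}$ at the right endpoint $\xi=2\lambda^2$ (i.e. $s=\pi/h$). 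So $h^{1+2/\xi}$ is largest near $s=\pi/h$, and the dominant contribution is a Laplace-type (tail) integral concentrated near that endpoint.

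The main obstacle is therefore the asymptotic evaluation of $\int_{I_{IV}} S(\xi,m^*)\,h^{1+2/\xi}\cos(sx_j)\,ds$ as $h\to 0$. The plan is to change variables from $s$ to $\xi$ (or to $\theta=sh/2$), write $h^{1+2/\xi}=h\exp\!\big(\tfrac{2}{\xi}\log h\big)=h\,e^{-\frac{2}{\xi}\log(1/h)}$, and recognise this as $e^{-\phi(\xi)\log(1/h)}$ with $\phi(\xi)=2/\xi$ decreasing, so the maximum of the integrand sits at the right endpoint $\xi_{\max}=2\lambda^2$ where $\phi(\xi_{\max})=1/\lambda^2$. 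A standard Laplace/Watson-lemma expansion at an endpoint where $\phi'\ne 0$ would give a factor $1/\log(1/h)$; but one must check the Jacobian of the $s\mapsto\xi$ map, which degenerates like $ds/d\xi \sim (\text{const})\,\xi_{\max}^{-1/2}(\xi_{\max}-\xi)^{-1/2}$ near $\xi=2\lambda^2$ (since $\sin^2$ has a critical point at $\theta=\pi/2$). This square-root singularity in the measure is exactly what converts the naive $1/\log(1/h)$ into $1/\sqrt{\log(1/h)}$: integrating $e^{-\phi'(\xi_{\max})(\xi_{\max}-\xi)\log(1/h)}(\xi_{\max}-\xi)^{-1/2}\,d\xi$ against a bounded, continuous amplitude yields a Gamma-function constant times $(\log(1/h))^{-1/2}$. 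The oscillatory factor $\cos(sx_j)$ is harmless here: near the endpoint $s=\pi/h$ it contributes at worst an $O(1)$ bounded factor uniformly in $j$ (and one should note $x_j=jh\lambda$ so there is no cancellation one can rely on, only the trivial bound $|\cos|\le 1$), hence it does not affect the order. Assembling the four pieces, the regime~IV term $O\!\big(h^{1/\lambda^2}/\sqrt{\log(1/h)}\big)$ dominates whenever $1/\lambda^2<2$, i.e. $\lambda>1/\sqrt 2$, while for $\lambda\le 1/\sqrt2$ it is $O(h^2)$ and regime~I governs — which is precisely the dichotomy underlying Theorem~\ref{T:MainTheorem}.
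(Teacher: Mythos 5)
Your proposal is correct and follows essentially the same route as the paper: bound each $\sup_j|E^i_j|$ by $\frac{1}{\pi}\int_{I_i}|\widehat{E}(s)|\,ds$, insert the regime-wise estimates of Lemma~\ref{L:LemmaA}, and treat regime~IV by a Laplace-type endpoint analysis in the $\xi$ variable, where the square-root Jacobian singularity at $\xi=2\lambda^2$ produces the $1/\sqrt{\log(1/h)}$ factor (the paper reaches the same conclusion via the substitution $z^2=1-\xi/(2\lambda^2)$ and an explicit splitting into a Gaussian-limit piece and an exponentially small tail). The only cosmetic difference is in regime~I, where the paper identifies the inverse Fourier transform of the Gaussian-times-polynomial term explicitly (useful later for constants), whereas your cruder absolute-integrability bound already suffices for the stated $O(h^2)$.
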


\begin{proof}

Again, only key features of the proof are given here while further details are included in the Appendix.\\

In wave number regime I, the analysis is similar to that in \citet{GC}. The key observation is that the term $\widehat{u}(s,1)(\frac{1}{24}s^4-\frac{1}{48}\lambda^2 s^6+\frac{1}{8}\lambda^2 s^4)$ has a readily identifiable inverse Fourier transform, and we deduce that the error contribution is $O(h^2)$.\\

In wave number regime II, we again follow the analysis in \citep{GC} to conclude that the error contribution from this regime will be dominated by $h^q$ for any $q > 0 $.\\

In wave number regime III, we can write

\begin{equation}
\label{EQ4}
\int _{s_{N-1}}^{s_{m^*}} \big|\widehat{U}^N(s) \cos(s x_j)\big| \; ds \leq \int _{0}^{\pi/h} \big|\widehat{U}^N(s)\big| \; ds,
\end{equation}

%\noindent and we make use of the asymptotic uniform bound for $|\widehat{U}(s)|$ derived above to deduce that the contribution to the numerical error from wave number regime III is bounded by\\ 
%\[\frac{((m^*+1)!)^2}{(2m^*+2)}\frac{(N-m^*-1)!}{(N+m^*)!}\frac{\pi}{h}=\frac{((m^*+1)!)^2}{(2m^*+2)} . \frac{(N-m^*-1)!}{(N+m^*)!}N\pi\lambda\]\

\noindent which is $\pi/h \cdot O(h^{2 m^* + 1})$, i.e., $O(h^{2m^*})$. \\ %As $m^* \geq 1$, we conclude that the contribution from this regime is bounded by $O(h^q)$ where $q \geq 2$.\\

Finally, we consider the error contribution from wave number regime IV, i.e., where $s_{m^*} \leq s \leq \pi/h$.\\
We need to determine the behaviour, as $h \rightarrow 0$, of

\[\int^{\frac{\pi}{h}}_{s=s_{m^*}} S(\xi,m^*)h^{(1+\frac{2}{\xi})} \cos(s x_j)\; ds,\]

\noindent the magnitude of which is maximised at $j=0$ (this is because $S$ is positive in this wave number regime). So we only need to consider the value of 

\[ F(h)=\int^{\frac{\pi}{h}}_{s=s_{m^*}} S(\xi,m^*) h^{(1+\frac{2}{\xi})} \; ds.\]\\

\noindent From $\xi=2\lambda^2\sin^2(\frac{sh}{2})$, we find that

\[ F(h)=\int^{2\lambda^2}_{\xi=\xi_{m^*}} \frac{ S(\xi,m^*)h^{\frac{2}{\xi}} }{  \lambda\sqrt{2}\sqrt{\xi}\sqrt{1-\frac{\xi}{2\lambda^2}} } \; d\xi. \]

\noindent As the lowest (i.e., dominant) order in $h$ from $h^{\frac{2}{\xi}}$ occurs when $\xi=2\lambda^2$ we expect that this integral will be close to $O(h^{\frac{1}{\lambda^2}})$ and using careful, but elementary, analysis 
(see the Appendix) we find that

\begin{eqnarray}
\label{intapp}
I=\frac{F(h)}{h^{\frac{1}{\lambda^2}}}=O\left(\frac{1}{ \sqrt{   \log{   1/h   }    }}\right),
\end{eqnarray}

\noindent and so the error contribution in regime IV is found to be $O\left({h^{\frac{1}{\lambda^2}}}/{ \sqrt{   \log{   \frac{1}{h}   }    }}\right)$. 

\end{proof}

As a consequence of Lemma~\ref{L:LemmaB}, we obtain Theorem~\ref{T:MainTheorem}, as stated in the Introduction.
%, which shows that the Crank-Nicolson scheme with uniform time steps exhibits second order convergence (in the maximum norm) for the time-changed heat equation, with Dirac initial data, as the time step $k$ tends to zero with $\lambda=k/h$ held fixed, provided that $\lambda \leq 1/\sqrt{2}$. For $\lambda \geq 1/\sqrt{2}$ the scheme is still convergent and with a rate at least equal to $1/\lambda^2$.\
This result follows from the observation that when $\lambda \leq 1/\sqrt{2}$, we have $m^* \geq 1$ and $1/\lambda^2 \geq 2$, so the convergence behaviour is eventually dominated by wave number regime I and the errors (in the maximum norm) will be $O(h^2)$.
However, if $\lambda > 1/\sqrt{2}$, then $m^*=1$ and $1/\lambda^2< 2$, so the numerical error of the scheme is dominated by wave number regime IV. The scheme still converges but the errors will now be  $O(h^{1/\lambda^2}/\sqrt{\log(1/h)})$. The logarithmic factor in this expression is slow growing and the error will be close to $O(h^{1/\lambda^2})$ which is worse than $O(h^2)$.

So our result is that we always get convergence in the transformed numerical scheme whereas, without the time change, the errors eventually increase as $h \rightarrow 0$, with $\lambda$ controlling how small $h$ has to be for the divergence to manifest itself in practice.

\section{Numerical results and complexity considerations}
\label{sec:numerical}

Numerical experiments have been performed to investigate the behaviour of the time-changed scheme used to solve the heat equation with delta-function initial conditions. The analysis used $T=1$ and the space domain was truncated at $x=\pm 10$. The numerical scheme was run with a largest value of $k = 0.01$, i.e., with 100 time steps in the transformed coordinates. The grid was repeatedly refined by dividing  both $h$ and $k$ by 2 and the calculations were stopped when the number of time steps reached 3200.\\

The error (in the maximum norm) of the scheme was analysed by comparing the numerical results with the exact solution and the slope of the log-log plot of the error against $h$ was calculated. The slope was then plotted against the value of $\lambda$ in Figure 1, along with the $h$ exponent for the error derived from the analysis above, where we ignore the effect of the $\log(\sqrt{1/h})$ term on the behaviour of the error in wave number regime IV and just use the expression $\min(2,1/\lambda^2)$.

\begin{figure}[htb]
\caption{Convergence order for the time-changed scheme as a function of $\lambda$}
\includegraphics[width=5.5in,height=2.6in]{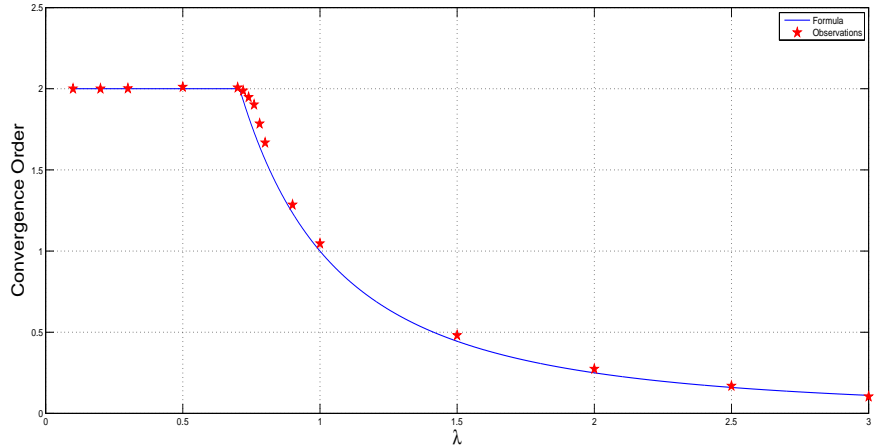}
\label{fig:ConvergenceOrder200912}
\end{figure}
We see a good match between theory and experiment with the larger divergences being seen where $\lambda$ is closest to the critical value of $1/\sqrt{2}$.
Figures 2 and 3 show the log-log plots of the errors versus $h$ for the cases $\lambda=0.5$ and $\lambda=1.0$, either side of $1/\sqrt{2}$.

\begin{figure}[!]
\caption{Error plot for the time-changed scheme with $\lambda = 0.5$ }
\includegraphics[width=5.5in,height=2.5in]{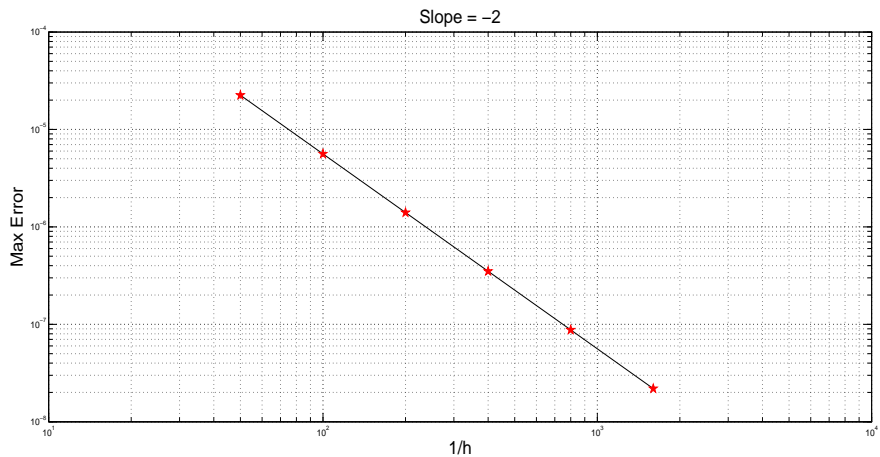}
\label{fig:LambdaHalf200912}
\end{figure}

\begin{figure}[!]
\caption{Error plot for the time-changed scheme with $\lambda = 1.0$ }
\includegraphics[width=5.5in,height=2.5in]{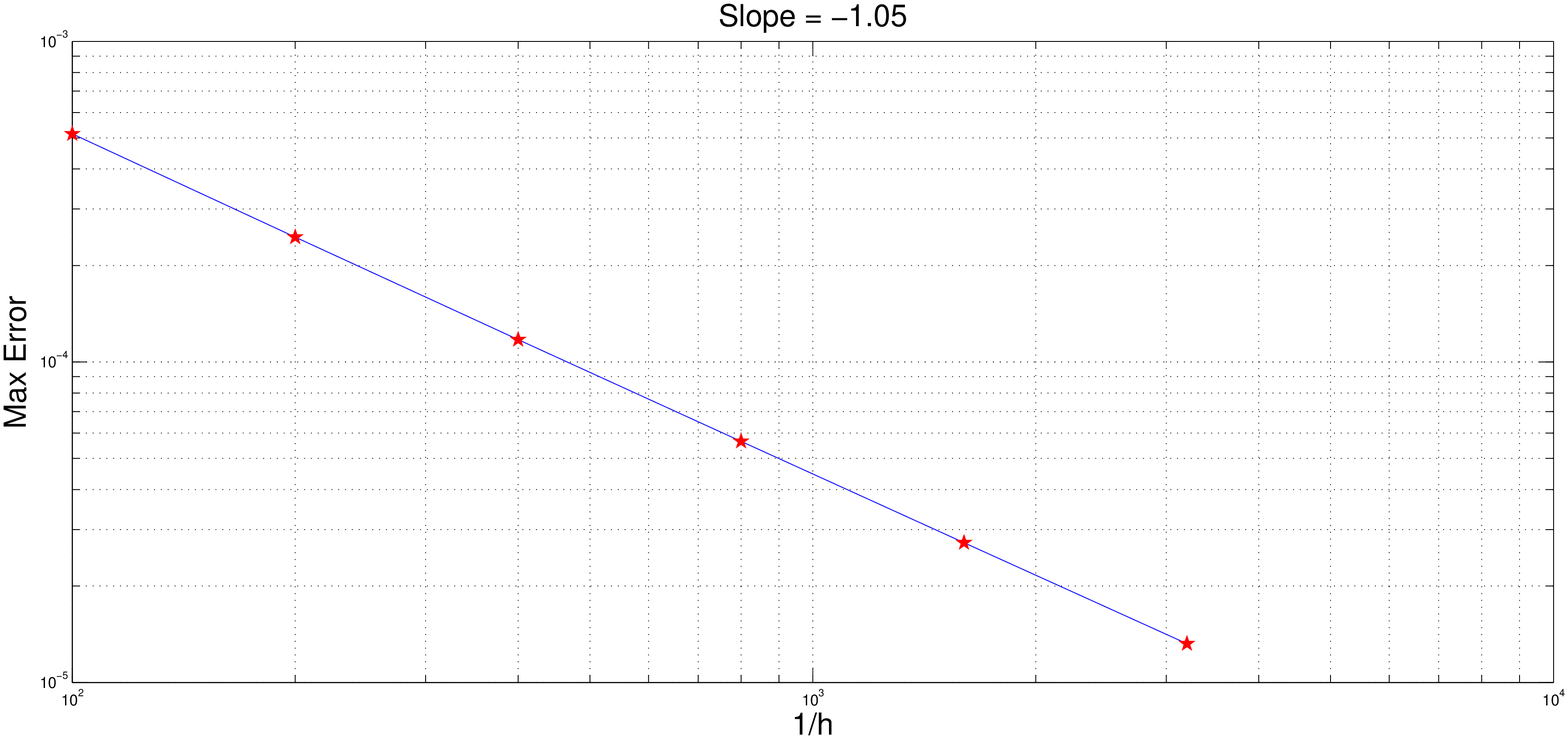}
\label{fig:LambdaOne200912}
\end{figure}

%For the case where $\lambda=0.5$ the slopes are all very close to -2, while for the case $\lambda=1.0$ there is more variability in the slopes around the value of -1.  

It is not possible to determine experimentally the $h$ exponent with complete accuracy for two reasons.

Firstly, as $h \rightarrow 0$, the time required for each solution increases rapidly so, for comparison purposes, all the calculations were terminated when the number of time steps reached 3200. As the overall error of the scheme is a mixture of errors with different $h$ exponents, the asymptotically dominating $h$ exponents will not be reached because the calculations are terminated early. In particular, for $\lambda$ slightly above $1/\sqrt{2}$, the contribution to the error from the highest wave number regime will be limited and lower values of $h$ need to be reached before this error clearly dominates the other errors. Note that our analysis does not identify the weighting factors for the errors of different order in $h$.

Secondly, the additional term, $\sqrt{\log(  1/h  )}$, in the error behaviour in wave number regime IV will distort the error plot and tend to increase the apparent $h$ exponent to some extent.

\bigskip

We have also carried out a comparison of the error performance between the time change scheme and the Rannacher scheme. 
For $\lambda \leq 1/\sqrt{2}$, both schemes will show quadratic convergence and, for small values of $h$, we can write

\[\widehat{U}^N_R(s)-\widehat{u}(s,1)=\widehat{u}(s,1)\left(\frac{1}{24}s^4+\frac{1}{8}\lambda^2 s^4-\frac{1}{96}\lambda^2 s^6\right)h^2(1+o(1))\]

\noindent %for the original Rannacher scheme, 
and

\[\widehat{U}^N_{TC}(s)-\widehat{u}(s,1)=\widehat{u}(s,1)\left(\frac{1}{24}s^4+\frac{1}{8}\lambda^2 s^4-\frac{1}{48}\lambda^2 s^6\right)h^2(1+o(1)),\]

\noindent where $\widehat{U}^N_R(s)$ and $\widehat{U}^N_{TC}(s)$ are the Fourier transforms for the Rannacher and time-changed scheme in wave number regime I.

For $m \geq 1$, the $m$th derivative of the cumulative Normal distribution has the Fourier transform given by

\[\widehat{N^{(m)}}(s)=(is)^{(m-1)}e^{-s^2/2}\]  

\noindent so that the Fourier inverse of $(is)^{(m-1)} e^{-s^2/2}$ is $\frac{1}{\sqrt{2\pi}}N^{(m)}(x)      $. We can then estimate the ratio of the errors of the two schemes (at $x=0$) as

\[ \frac{E_R}{E_{TC}}=\frac{3\left(\frac{1}{24}+\frac{1}{8}\lambda^2\right)-15\frac{1}{96}\lambda^2}{3\left(\frac{1}{24}+\frac{1}{8}\lambda^2 \right)-15\frac{1}{48}\lambda^2}\]\

\noindent by evaluating $N^{(5)}(0)=3$ and $N^{(7)}(0)=-15$ from

\[N^{(5)}(x)=\frac{1}{\sqrt{2\pi}}(x^4-6x^2+3)e^{-x^2/2}\]

\noindent and

\[N^{(7)}(x)=\frac{1}{\sqrt{2\pi}}(-x^6-15x^4+45x^2-15)e^{-x^2/2}.\]

The ratio ${E_R}/{E_{TC}}$ has been plotted as a function of $\lambda$ in Figure \ref{fig:A} and the results compared to the empirical observations. So for values of $\lambda$ close to the critical value, there is a modest reduction in the error of the numerical scheme due to the time change method. 

Note that for $\lambda \geq 1/\sqrt{2}$ the errors will no longer be comparable as the order of convergence of the time-changed scheme is no longer quadratic.

\begin{figure}[htb]
\caption{Plot of ratio of Rannacher to time change error}
\includegraphics[width=5.5in,height=2.6in]{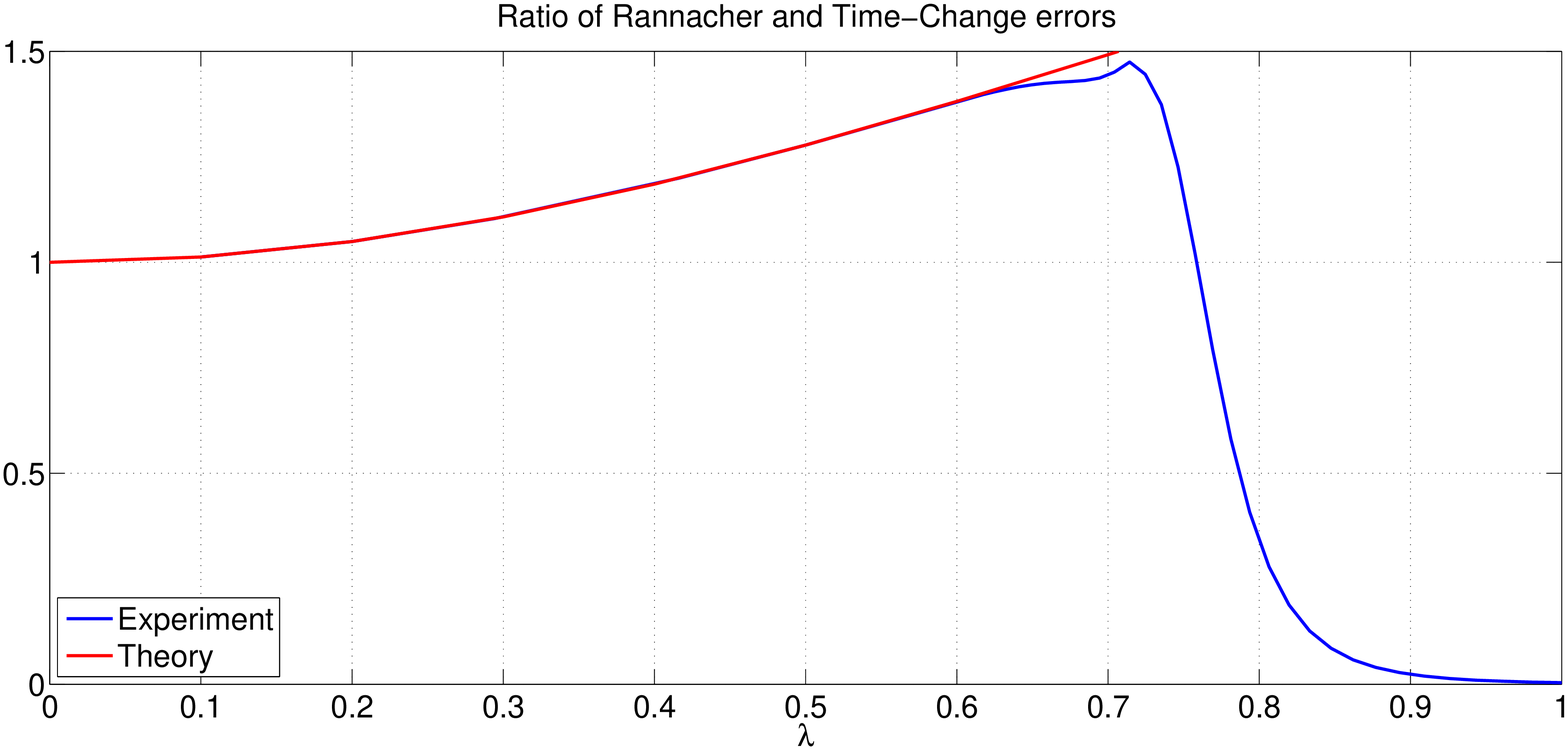}
\label{fig:A}
\end{figure}

\bigskip

We are also interested in the computational efficiency of the scheme, so we have investigated the relative performance of the two schemes when both are subject to a constraint on the  computational cost, $C$. We can write $C\sim c(1/k)(1/h)$ for some constant $c$ and we then can approximate the errors as 
\begin{eqnarray*}
E_R &=& 3(1/24+\lambda^2/8)h^2-(15\lambda^2/96)h^2\\
%\[E_R=3(1/24)h^2+(3/8-15/96)\lambda^2h^2\]
&=& (1/8)h^2+(21/96)k^2
\end{eqnarray*}
\noindent and similarly
\begin{eqnarray*}
E_{TC}&=&(1/8)h^2+(3/96)k^2.
\end{eqnarray*}
Then
\begin{eqnarray*}
E_R&=&hk((1/8)(h/k)+(21/96)(k/h))\\
&=&(c/C)((1/8)(1/\lambda)+(21/96) \lambda)
\end{eqnarray*}
\noindent and similarly
\begin{eqnarray*}
E_{TC}&=&(c/C)((1/8)(1/\lambda)+(3/48) \lambda).
\end{eqnarray*}

The errors $E_R$ and $E_{TC}$ will be separately minimised as functions of $\lambda$ at\\
\[\lambda^*_R=\sqrt{(1/8)(96/21)}\sim0.756\]

and

\[\lambda^*_{TC}=\sqrt{(1/8)(48/3)}=\sqrt{2}.\]

\begin{figure}[htb]
\caption{Plot of ratio of Rannacher to time change optimal error at the same cost}
\includegraphics[width=5.5in,height=2.6in]{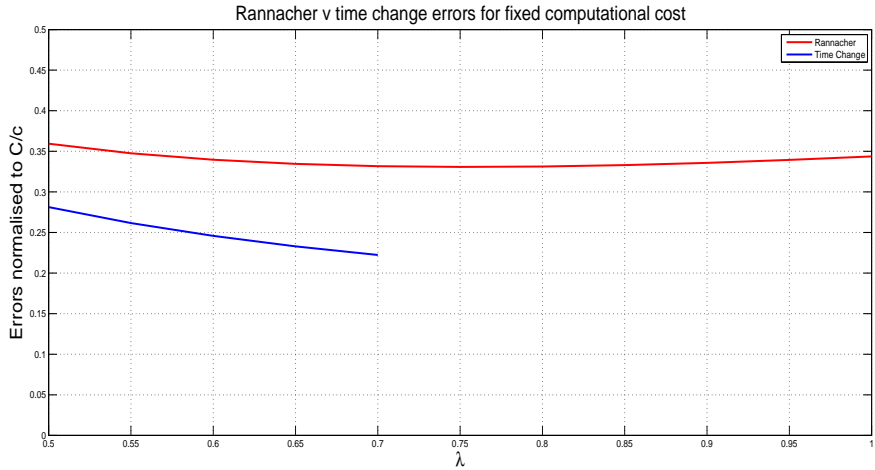}
\label{fig:B}
\end{figure}

Note that for the time change the analysis is only valid for $\lambda \leq 1/\sqrt{2}$, so the minimum occurs at $1/\sqrt{2}$.\\

\indent The plot (Figure \ref{fig:B}) of the errors for $\lambda \leq 1/\sqrt{2}$, with the fixed computational cost, shows that the time change scheme has the lowest achievable error for the value of $\lambda = 1/\sqrt{2}$ and this error is, in turn, better than the error of Rannacher scheme at the value $\lambda^*_R$.\\

%%%%%%%%%%%%%%%%%%%%%%%%%%%%%%%%%%%%%%%%%%%%%%%%%%%%%%%%%%%%%%%%%%%%%%%%%%%%%%%%%%%%%%%%%%
\section{The time change applied to the Black-Scholes equation}~\label{S:BSApp}
%%%%%%%%%%%%%%%%%%%%%%%%%%%%%%%%%%%%%%%%%%%%%%%%%%%%%%%%%%%%%%%%%%%%%%%%%%%%%%%%%%%%%%%%%%

In this section, we describe numerical results obtained by applying the time change to the Black-Scholes equation

\begin{eqnarray}
\label{bspde}
\frac{\partial V}{\partial t}+\frac{1}{2}\sigma^2S^2\frac{\partial^2 V}{\partial S^2}+rS\frac{\partial V}{\partial S}-rV=0, \quad S>0, t \in (0,T].
\end{eqnarray}

This is the equation that can be used to calculate, for example, the price, $V$, of a European option. Here, $S$ is the price of the underlying, $\sigma$ is the volatility, $r$ is the risk-free rate, and $t$ is time. For a call option, the terminal  condition is given by the payoff, i.e., $\max(S-K,0)$, where $K$ is the strike of the option. Practically important sensitivities of the call price to the initial asset price, used in risk management, are given by the delta, $\Delta$, and gamma, $\Gamma$, defined as

\[\Delta = \frac{\partial V}{\partial S} \]

\noindent and

\[\Gamma = \frac{\partial^2 V}{\partial S^2} \]

\noindent respectively.

Our investigation of convergence for the heat equation was based on Fourier analysis that relies on the PDE having constant coefficients whereas the Black-Scholes PDE used to price a European option does not have constant coefficients and so our analysis is not directly applicable in this case. 

\indent Nevertheless we have implemented the natural time change method to study the convergence behaviour of the gamma calculated with the modified numerical scheme. 

\indent In this context the natural analogue to the solution of the heat equation with Dirac delta initial conditions would be the PDE satisfied by the gamma,

\[\frac{\partial \Gamma}{\partial t}+\frac{1}{2}\sigma^2S^2\frac{\partial^2\Gamma}{\partial S^2}+(rS+2\sigma^2)\frac{\partial \Gamma}{\partial S}+(r+\sigma^2)\Gamma =0,\] 

\noindent where the terminal condition for this equation is given by the second derivative of the payoff with respect to $S$, i.e., $\Gamma(S,T) = \delta(S-K)$. Here we have also assumed that $\sigma$ and $r$ are constant.

\indent This PDE could be used to test the performance of the time change approach but a more useful approach (as it reflects industry practice) is to apply the time change directly to the Black-Scholes equation and then use finite differences to derive the values for the gamma from the calculated option values. (Note that the scheme was also tested with the gamma PDE and gave identical results).

We first note that if we can write our PDE in the form

\[\frac{\partial V}{\partial \tau}- \mathcal{L} V = 0,\]

\noindent where $\tau=T-t$ is the time to maturity and where the spatial differential operator $\mathcal{L}$ has time-independent coefficients, then the transformed equation for $\tilde{t} = \sqrt{\tau}$ is simply

\[\frac{\partial V}{\partial \tilde t} - 2\:\tilde t \:\mathcal{L} V = 0.\]

\noindent If the coefficients in $\mathcal{L}$ are time-dependent, then these need to be rewritten in terms of $\tilde t$. This simple change to the equation results in very minimal changes to the solver code. The calculation done for a single time step in the original variables can be written as

\[(I+L)V^{n+1}=(I-L)V^{n},\]
\noindent where $L$ is a matrix dependent on $k$, $h$, $S$, $\sigma$ and $r$ while $V^{n}$ is the solution vector at time step $n$. In the case of constant coefficients, and in the time-changed variables, this calculation simply becomes
\[(I+ 2 \:\tilde t_{n+1}L)\widetilde{V}^{n+1}=(I - 2 \:\tilde t_{n}L) \widetilde{V}^{n},\]
where $\widetilde{V}^n$ serves as numerical approximation to $\widetilde{V}(\cdot,\tilde{t}_n) = V(\cdot,T-\tilde{t}_n\,\!^2)$.
In particular, the transformed Black-Scholes equation is
\[
\label{bstrans}
\frac{\partial \widetilde{V}}{\partial \tilde t} - 2\:\tilde t\:\left(\frac{1}{2}\sigma^2S^2\frac{\partial^2\widetilde{V}}{\partial S^2}+rS\frac{\partial \widetilde{V}}{\partial S}-r\widetilde{V}\right)=0
\]

\noindent with the payoff function as the initial condition.

The transformed equation for $V$ was solved using the Crank-Nicolson scheme and the gamma calculated by finite differences. The errors in gamma were computed from the analytic values and the convergence rate of the numerical error as a function of $\lambda$ was analysed.

Although the scheme does not have the constant coefficients required for the application of Fourier analysis, the errors do behave in a very similar way to those for the heat equation (see Figure 4).

\begin{figure}[htb]
\caption{Convergence order for the gamma calculated from the time-changed Black-Scholes scheme as a function of $\lambda$}
\includegraphics[width=5.5in,height=2.5in]{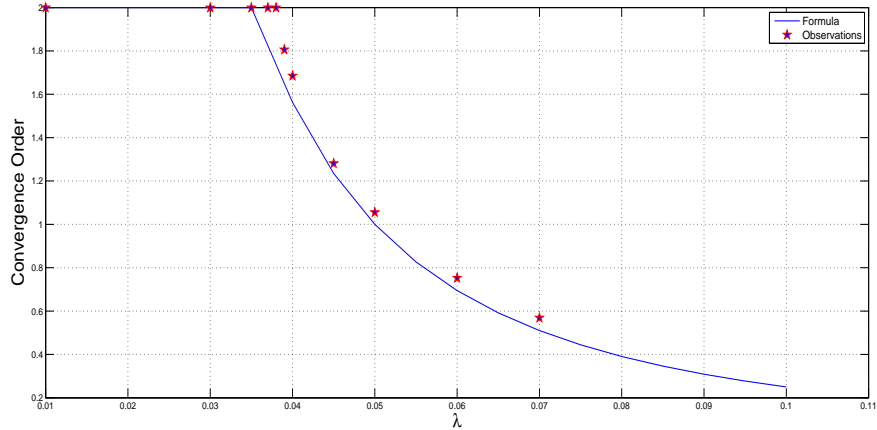}
\label{fig:BSConvergenceOrder200912}
\end{figure}

The volatility, $\sigma$, was set to 20$\%$ and the strike, $K$, was set to 100. The risk-free rate, $r$, was set to 5$\%$. The results of the analysis showed that there was a critical value of $\lambda$ below which the error in the maximum norm was $O(h^2)$ and above which the $h$ exponent declined in the same manner as for the heat equation. Also shown in Figure 4 is the plot of $\min(2,1/(\sigma^2 K^2 \lambda^2))$ and it is clear that this gives a reasonable description of the behaviour of the $h$ exponent although it is not quite as good as with the heat equation itself. 

\indent We can justify the critical value of $\lambda$ by noting that, in the Black-Scholes equation, the factor $\frac{1}{2}\sigma^2S^2$ plays the same role as the factor $\frac{1}{2}$ in the heat equation and we also observe that if we had written the heat equation in the form $u_t= \frac{1}{2}D \, u_{xx}$, for some constant diffusivity $D$, we would have found the critical value of $\lambda$ to be $1/\sqrt{2D}$.
\newline
\indent Typically, the worst behaviour of a numerical scheme for the Black-Scholes equation with delta function initial conditions is seen at times close to maturity and in the vicinity of the strike and so the strike is an appropriate parameter to use to represent the coefficient of $\frac{\partial^2V}{\partial S^2}$ which then determines the critical value of $\lambda$.

%%%%%%%%%%%%%%%%%%%%%%%%%%%%%%%%%%%%%%%%%%%%%%%%%%%%%%%%%%%%%%%%%%%%%%%%%%%%%%%%%%%%%%%%%%
%Regularity of the European call after the time change and its possible impact on the numerical scheme
%%%%%%%%%%%%%%%%%%%%%%%%%%%%%%%%%%%%%%%%%%%%%%%%%%%%%%%%%%%%%%%%%%%%%%%%%%%%%%%%%%%%%%%%%%

Even without the precise results available from Fourier analysis, we can still attempt a partial explanation of how the time change improves the convergence of the Crank-Nicolson numerical scheme. One consideration is that the truncation error of the numerical scheme has a leading order term proportional to the third derivative of the European option price and the behaviour of this derivative is much improved by the time change. 

The theta, $\Theta_C$, of the call option is given by (see \citet{HAUG}, p.64)

\[\Theta_C=\frac{\partial C}{\partial t}=\frac{Sn(d_1)\sigma}{2\sqrt{T-t}}-rKe^{-r(T-t)}N(d_2), \]

\noindent where $C$ is the solution to (\ref{bspde}) with $C(S,T) = \max(S-K,0)$,
$N(x)$ is the cumulative Normal(0,1) distribution, $n(x)$ is its first derivative (the Gaussian function) and

\[d_{1,2}=\frac{(       \log(S/K)  +       (r\pm\frac{1}{2}\sigma^2)(T-t)         )   }{       \sigma\sqrt{T-t}    }.\]

Clearly $\Theta_C$ will become singular at the strike as $t \rightarrow T$ and any higher derivatives will also be singular there.

However, if we change the time variable to $\tilde t = \sqrt{T-t}$, we find that

\[\tilde \Theta_C=\frac{\partial C}{\partial \tilde t}=-2\tilde t\frac{\partial C}{\partial t}=-2\tilde t\left(\frac{Sn(d_1)\sigma}{2\tilde t}-rKe^{-r\tilde t^2}N(d_2)\right)\]
\[=-Sn(d_1)\sigma+2\tilde t rKe^{-r\tilde t^2}N(d_2)\]

\noindent and $\tilde \Theta_C$ is then not singular at $\tilde t=0$.

Carrying on to the second `time' derivative we see that

\[\frac{\partial^2 C}{\partial \tilde t^2}= -Sn'(d_1)\sigma \frac{\partial d_1}{\partial \tilde t}+2rKe^{-r\tilde t^2}(1-2\:\tilde t\: r)N(d_2)-2\:\tilde t \:r Ke^{-r\tilde t^2}n(d_2)\frac{\partial d_2}{\partial \tilde t},\]

\noindent where we also have

\[\frac{\partial d_{1,2}}{\partial \tilde t}=-\frac{\log{\frac{S}{K}}}{\sigma \:\tilde t^2}+\frac{r \pm \frac{\sigma^2}{2}}{\sigma}\]

\noindent so there are potentially singular terms to consider, i.e., those featuring inverse powers of $\tilde t$. 

However, if $S \neq K$ then both $d_1$ and $d_2 \rightarrow \pm\infty$ as $\tilde t \rightarrow 0$ so that the exponential factors in $n(d_i)=(1/\sqrt{2\pi})\exp{(-d_i^2/2)}$ will eventually overwhelm the inverse powers of $\tilde t$.

Finally, if $S=K$ the term $\log(S/K)$ vanishes and the derivative is non-singular at the strike. 

It is clear that this analysis will extend to the higher derivatives of $C$ because the negative powers of $\tilde t$ will now only arise from differentiating $N$ and will then only appear as  multipliers of the Gaussian function. These powers will only be present for $S \neq K$ where the Gaussian function will decay rapidly and overwhelm the inverse powers. So the higher derivatives will not become singular as $\tilde t \rightarrow 0$.
It is also clear that the factor $\tilde{t}$ in (\ref{bstrans}) causes the singular spatial truncation error, which contains the fourth derivative of $V$ with respect to $S$, to become more well-behaved for small $\tilde{t}$ in the vicinity of the strike.
So the time change has, to some extent, 
regularised the behaviour of the truncation error.
 
Although it is not entirely clear how to deduce convergence properties of the transformed numerical scheme directly from this, and particularly convergence of derivatives of the solution,
it seems plausible that the improved regularity of the truncation error is advantageous when using initial conditions that are not smooth.

%%%%%%%%%%%%%%%%%%%%%%%%%%%%%%%%%%%%%%%%%%%%%%%%%%%%%%%%%%%%%%%%%%%%%%%%%%%%%%%%%%%%%%%%%%
\section{American options}~\label{S:AmericPut}
%%%%%%%%%%%%%%%%%%%%%%%%%%%%%%%%%%%%%%%%%%%%%%%%%%%%%%%%%%%%%%%%%%%%%%%%%%%%%%%%%%%%%%%%%%

%In Forsyth and Vetzal 
\citet{FV} investigate the convergence properties of Crank-Nicolson time-stepping when used to calculate the price of an American put by a penalty method.
The penalised Black-Scholes equation is given by
\[\frac{\partial V}{\partial t}+\frac{1}{2}\sigma^2S^2\frac{\partial^2 V}{\partial S^2}+rS\frac{\partial V}{\partial S}-rV + \rho \max( \max(K-S,0)-V,0)=0,\]
\noindent
where $\rho>0$ is a penalty parameter. 
This non-linear equation is solved with the terminal condition set equal to the option payoff. The penalty term comes into play when the solution violates the requirement that $V$ should not fall below the payoff. As the penalty parameter $\rho$ increases, the solution of the equation converges to the value for the American option satisfying
\[
\max\left(\frac{\partial V}{\partial t}+\frac{1}{2}\sigma^2S^2\frac{\partial^2 V}{\partial S^2}+rS\frac{\partial V}{\partial S}-rV,
\max(K-S,0)-V\right) = 0.
\]
For the present purposes, we are interested solely in the convergence of the Crank-Nicolson scheme and choose $\rho$ large enough to make the penalty solution numerically indistinguishable from its limit.

\citet{FV} implemented their solver using the Crank-Nicolson scheme with Rannacher start up and demonstrated that as the time step was reduced with the ratio of time step to space step held constant, the error in the calculated at the money (ATM) value of the American put tended to $O(h^{\frac{3}{2}})$ behaviour. Their analysis of the problem suggests that non-uniform time-stepping might restore second order convergence so they went on to describe and implement an adaptive time-stepping procedure that did indeed yield second order convergence for the option value. 

Their results suggest that the simple time change method described above might also be able to restore second order convergence. 

\begin{table}[thb]
\centering
\begin{tabular}{|l|l|l|l|l|}
\hline
\multicolumn{5}{|c|}{Ratios of successive differences} \\
\hline
\multicolumn{5}{|c|}{$\lambda$ = 0.0125} \\
\hline
$M$ & $N$ & Value & Delta & Gamma \\ \hline
\ 3200 & 640 & 3.98 & 3.92 & 3.91 \\
\ 6400 & 1280 & 3.97 & 3.97 & 3.94 \\
\ 12800 & 2560 & 3.97 & 3.97 & 3.96 \\
\ 25600 & 5120 & 3.99 & 3.97 & 3.99 \\ \hline
\end{tabular}\\
\medskip
\begin{tabular}{|l|l|l|l|l|}
\hline
\multicolumn{5}{|c|}{Ratios of successive differences} \\
\hline
\multicolumn{5}{|c|}{$\lambda$ = 0.0250} \\
\hline
$M$ & $N$ & Value & Delta & Gamma \\ \hline
\ 3200 & 320 & 3.84 & 4.01 & 3.80 \\
\ 6400 & 640 & 3.99 & 3.93 & 3.96 \\
\ 12800 & 1280 & 3.94 & 3.93 & 3.89 \\
\ 25600 & 2560 & 3.96 & 3.97 & 3.98 \\ \hline
\end{tabular}\\
\medskip
\begin{tabular}{|l|l|l|l|l|}
\hline
\multicolumn{5}{|c|}{Ratios of successive differences} \\
\hline
\multicolumn{5}{|c|}{$\lambda$ = 0.0500} \\
\hline
$M$ & $N$ & Value & Delta & Gamma \\ \hline
\ 3200 & 160 & 3.85 & 3.65 & 2.07 \\
\ 6400 & 320 & 3.78 & 3.87 & 2.08 \\
\ 12800 & 640 & 3.89 & 3.87 & 2.09 \\
\ 25600 & 1280 & 3.89 & 3.90 & 2.09 \\ \hline
\end{tabular}\\
\medskip
\caption{Ratios of successive differences between numerical solutions for successive refinements for $V$, $\Delta$ and $\Gamma$; $M$ the number of steps in $S$, $N$ the number of steps in $t$; $\sigma=0.20$, $r=0.05$,
$\rho=10^6$}
\label{tab:Table1}

\end{table}

The penalty code was modified to incorporate the time change and numerical results were generated. Second order convergence was observed for the option value and delta for the ATM option and it was also observed that, for sufficiently small $\lambda$, the gamma showed second order convergence. Table ~\ref{tab:Table1} shows the ratios of successive differences for mesh refinements with $\lambda$ equal to 0.0125, 0.025 and 0.05. As with the European option discussed above, and with the strike, $K=100$, and the volatility, $\sigma=20\%$, the critical value of $\lambda$ appears to be in the vicinity of $1/(\sigma K \sqrt{2})=0.035$.

It was also observed that the plots of the calculated gamma revealed instability around the exercise boundary, behaviour which was also seen in \citet{FV}, where the authors found that their adaptive time-stepping approach resulted in instability of the gamma at the exercise boundary unless fully-implicit time-stepping was used. This behaviour is a separate phenomenon independent of the short-time asymptotics addressed here.

%%%%%%%%%%%%%%%%%%%%%%%%%%%%%%%%%%%%%%%%%%%%%%%%%%%%%%%%%%%%%%%%%%%%%%%%%%%%%%%%%%%%%%%%%%
%Regularity of the American put after the time change and its possible impact on the numerical scheme
%%%%%%%%%%%%%%%%%%%%%%%%%%%%%%%%%%%%%%%%%%%%%%%%%%%%%%%%%%%%%%%%%%%%%%%%%%%%%%%%%%%%%%%%%%

In the previous section we suggested that the improved convergence properties of the Crank-Nicolson scheme might be related to improved regularity of the third time derivative of the value function of the European put in the vicinity of the strike and close to maturity. 

We would like to be able to perform a similar analysis for the American put but, of course, no simple closed form expression for the value function is known. However, \citet{MA} do present an asymptotic expansion for the value function which can be decomposed into the value function of the European put plus the sum of an asymptotic series representing the early exercise premium.

In \citet{MA}, some changes of variable are used to define the final form of the expansion. We write $S=Ke^x$ %, $k=2r/\sigma^2$ 
and $\xi=x/(2\sqrt{\tau_s})$, where $\tau_s$ is the rescaled time to maturity, equal to $2(T-t)/\sigma^2$. The transformed value function is then given by
\begin{equation}
v(x,\tau_s)=\frac{P(S,t)+S-K}{K},
\end{equation}
where $P(S,t)$ is the value of the American put as a function of the original variables. We can then write that 

\begin{eqnarray}
\label{americanexp}
v(x,\tau_s)=v^E(x,\tau_s)+\sum_{n=2}^{\infty} \sum_{m=1}^{\infty}\tau_s^{n/2}(-\log \tau_s)^{-m}F^{m}_{n}(\xi),
\end{eqnarray}

\noindent where $F^{m}_{n}$ are functions of the similarity variable $\xi$ and $v^E(x,\tau_s)$ is the value of the European put expressed in the new variables.

In \citet{MA}, only a few of the functions $F^{m}_{n}$ are fully identified and discussion of those is restricted to $m=1$. Those with $n \geq 0$ and $m=1$ are, in principle, tractable but only partial results are known for the cases where $m>1$. This limits our ability to draw conclusions from the behaviour of the derivatives of the early exercise premium. Nevertheless, we will illustrate some aspects of the behaviour of the derivatives by describing a single term of the early exercise premium and its form and behaviour after the time change.

For example we find that the leading order coefficient of the early exercise premium is given by 
\[ 
F^{1}_{2}(\xi) = \mu(\xi \exp{(-\xi^2/2) }/\sqrt{\pi} + (2\xi^2+1)\mathrm{erfc}(\xi)) \]

\noindent for some constant $\mu$.

After the time change this early exercise term in (\ref{americanexp}) is
\[
G = \mu \tilde t^2(-2\log \tilde t)^{-1}F^{1}_{2}(\eta),
\]
where $\eta=x/(2\tilde t)$. At the strike, we have 
\begin{eqnarray*}
\frac{\partial G}{\partial \tilde t} &=& \mu(2\tilde t(-2\log \tilde t)^{-1} -2\tilde t(-2\log \tilde t)^{-2}),  \\
\frac{\partial^2 G}{\partial \tilde t^2} &=& \mu(2(-2\log \tilde t)^{-1} +4(-2\log \tilde t)^{-2})+8(-2\log \tilde t)^{-3})
\end{eqnarray*}
\noindent and
\begin{eqnarray*}
\frac{\partial^3 G}{\partial \tilde t^3}=\frac{\mu}{\tilde t}(4(-2\log \tilde t)^{-2} +16(-2\log \tilde t)^{-3})+48(-2\log \tilde t)^{-4}).
\end{eqnarray*}

Acknowledging that this calculation only addresses a single term of the expansion of the early exercise premium we see that there is an improvement in the regularity of the derivatives of the put but unfortunately this does not obviously extend to the third derivative terms such as ${\partial^3 G}/{\partial \tilde t^3}$.

So, although we are encouraged by the empirical performance of the time change method applied to the American put, further analysis will be required to explain fully the improvement in convergence.   

\section{Conclusions}
\label{sec:concl}
%%%%%%%%%%%%%%%%%%%%%

In this paper we have presented the analysis of a simple and natural change of time variable that improves the convergence behaviour of the Crank-Nicolson scheme. When applied to the solution of the heat equation with Dirac delta initial conditions, the numerical solution of the time changed PDE is always convergent with rate of convergence determined by the ratio, $\lambda$, of time step to space step (held constant during grid refinement). 
\newline
\indent This behaviour contrasts strongly to that of the Crank-Nicolson scheme when applied to the original PDE, which is always divergent and where $\lambda$ controls how small the time step must be before the divergence appears. A proof of the behaviour of the time-changed scheme is given and numerical results presented to support the theoretical analysis.

Although the Fourier analysis used to prove this result cannot be applied directly to the Black-Scholes equation as it does not have constant coefficients, numerical experiments for the price and Greeks of a European call indicate that the time change method also leads to a convergent Crank-Nicolson scheme for this problem, without the need to introduce Rannacher steps, and quadratic convergence can be obtained if $\lambda$ is chosen appropriately.

The change of convergence order at the value of $\lambda$ which gives optimal complexity for given error tolerance is in line with the sensitivity of the error to $\lambda$ observed in \citet{GC2}.
The present analysis does not support the conjecture made in \citet{GC2}, however, that the convergence of the Rannacher scheme with non-uniform (square-root) time-stepping is never of second order, the argument being that for small initial time-steps the smoothing effect diminishes. In fact, we see here that under square-root time smoothing is not necessary at all.

The experimentally observed second order convergence for the time-changed scheme extends to the computation of American option values which satisfy a linear complementarity problem with singular behaviour of the free boundary.
These results are in line with those of \citet{FV} for a time-adaptive Crank-Nicolson scheme with Rannacher start-up.

It can be seen as an advantage of the time-changed scheme proposed here that it provides a remedy for both the reduced convergence order of the Crank-Nicolson scheme for European and American option values and for the divergence of their sensitivities by a single, simple modification.

%Finally, a similar improvement is seen in the convergence behaviour of the popular penalty method for pricing the American put. The time change results in quadratic convergence for the option value and also for its Greeks so long as the ratio of time step to space step is chosen appropriately.

Future work in this area will investigate other applications of the time variable change to option pricing and will consider whether the method offers efficiency gains over the standard approaches.

\clearpage

\appendix

\section*{Appendix A.\ Proof of Lemma \ref{L:LemmaA}, (\ref{expapp}) and (\ref{expandapp})}
%\section{Proof of Lemma 1, (\ref{expapp}) and (\ref{expandapp})}
\label{app:lemma1}

We analyse the behaviour of the Fourier transform $\widehat{U}^N(s)$ in each of the four wavenumber regimes.
\subsection*{\textbf{Proof of (\ref{expapp})}:}
\label{app:exp}

%Inspection of equation~\eqref{EQ2} shows that all the terms in the numerator are positive if $\xi \leq 1/N<1/(N-1)$. 
We start from (\ref{EQ3}),
%can then write $\log(U^N(s))$ as
\[
\log(\widehat{U}^N(s))=\sum^{N-1}_{k=1} \left(\log(1-k\xi) - \log(1+k\xi)\right) - \log(1+N\xi).
\]
\noindent 
We seek to expand the logarithmic terms. We begin by first expanding $\xi=2 \lambda^2 \sin^2(\frac{sh}{2})$ in powers of $h$ to obtain 
%\[\xi=\frac{\lambda^2 s^2}{2!} h^2- \frac{\lambda^2 s^4}{4!} h^4 + \frac{\lambda^2 s^6}{6!} h^6 - \frac{\lambda^2 s^8}{8!} h^8 \cos(\psi s h)\]\\
%\noindent where $0 \leq \psi \leq 1$. So then
\[
1-k\xi=1-\frac{k\lambda^2 s^2}{2!} h^2 +\frac{k\lambda^2 s^4}{4!} h^4 -\frac{k\lambda^2 s^6}{6!} h^6 + \frac{k\lambda^2 s^8}{8!}h^8 \theta_k
\equiv \sum_{i=0}^3 A_i h^{2i} \, + \, A_4 h^8,
\]\\
\noindent where $|\theta_k| \leq 1$. Setting $\delta=h^2$, we define
\begin{align}
\notag
g(\delta) \equiv 1-k\xi= \sum_{i=0}^3 A_i \delta^{i} \, + \, A_4 \delta^4
\end{align}
\noindent and
\begin{align}
\notag
f(\delta)=\log(1-k\xi)=\log{(g(\delta))}
\end{align}
\noindent and we can then expand $f(\delta)$ in terms of $\delta$ to obtain
\[\log(1-k\xi)=-\frac{k\lambda^2}{2!}s^2\delta+\frac{1}{2!}\left(\frac{2 k \lambda^2}{4!}-\frac{k^2 \lambda^4}{(2!)^2}\right)s^4\delta^2+\frac{1}{3!}\left(-\frac{6 k \lambda^2}{6!}-\frac{2k^3 \lambda^6}{(2!)^3}+\frac{6k^2 \lambda^4}{2!4!}\right)s^6\delta^3+Z_ks^8\delta^4\]
\noindent and we have to ensure that the remainder $Z_k$ is well-behaved. We do this by analysing the behaviour of %$f^{(4)}(\delta)$ which is given by
\[f^{(4)}(\delta)=\frac{p(\delta)}{g(\delta)^4},\]\\
\noindent where\\
\[p(\delta)=g(\delta)^3 g^{(4)}(\delta)-4g(\delta)g'(\delta)g^{(3)}(\delta)+6(g'(\delta))^2g(\delta)g''(\delta)-3(g''(\delta))^2g(\delta)-6(g'(\delta))^4.\]\\
Then $g(\delta) \rightarrow 1$ as $h \rightarrow 0$ if we ensure that $s<h^{-m}$, where $m < \frac{1}{2}$, because, for example,
\[|A_1 \delta| = \frac{k \lambda^2 s^2 h^2}{2} < \frac{N\lambda^2 s^2 h^2}{2} = \frac{ \lambda s^2 h}{2} \rightarrow 0.\]
Then $h$ can be chosen small enough (or $N$ large enough) so that
\[|1+A_1 \delta +A_2 \delta^2 +A_3 \delta^3 +A_4 \delta^4| > \frac{1}{2}\]
\noindent so the denominator of $f^{(4)}(\delta)$ is bounded away from zero.
Next we can show that
\[|g^{(i)}(\delta)|<\alpha_i|A_i|\]

%|A_1\delta+2A_2\delta^2+3A_3\delta^3+4A_4\delta^4|< \alpha_1|A_1\delta|\]\
\noindent for some constants, $\alpha_i$ for $i=0,1,2,3$ and $4$, because of the constraint on $s$. We can finally conclude that\\
\[|f^{(4)}(\psi\delta)|\delta^4<(2\alpha_4|A_4|+32\alpha_1\alpha_3|A_1||A_3|+48\alpha_1^2 \alpha_2|A_1|^2 |A_2|+24\alpha_2^2|A_2|^2+96\alpha_1^4|A_1|^4)h^8.\]\\
and it follows that we can estimate asymptotically the remainder by just retaining the term proportional to $|A_1|^4 h^8$ which is, in turn, proportional to $k^4 \lambda^8s^8h^8$.\\
\newline
\indent By changing the sign of $k$, we can obtain a similar expression for $\log(1+k\xi)$ hence find that
\[\log(1-k\xi)-\log(1+k\xi)=-k \lambda^2 s^2 h^2 +\frac{1}{12}k \lambda^2 s^4 h^4-\frac{1}{360}k \lambda^2 s^6 h^6-\frac{1}{12}k^3 \lambda^6 s^6 h^6 +W_k k^4 s^8 h^8, \]\

\noindent where $|W_k|<K$ for some constant, $K$, and so

\begin{align}
\notag
\log(\widehat{U}^N(s))&=\left(-\lambda^2 s^2 h^2 +\frac{1}{12}\lambda^2 s^4 h^4-\frac{1}{360}\lambda^2 s^6 h^6\right)\left(\sum_{k=1}^{k=N-1}k\right)%\\
%\notag
-\frac{1}{12}\lambda^6 s^6 h^6 \left(\sum_{k=1}^{k=N-1}k^3\right)\\
\notag
&+\left(-\frac{\lambda^2 s^2 h^2}{2}+\frac{1}{24}\lambda^2 s^4 h^4-\frac{1}{720}\lambda^2 s^6 h^6\right)N%\\
%\notag
+\frac{1}{8}\lambda^4 s^4 h^4 N^2- \frac{1}{48}\lambda^4 s^6 h^6 N^2 \\
\notag
&- \frac{1}{24}\lambda^6 s^6 h^6 N^3  +O(s^8h^3),
\end{align}
\noindent because $(\sum_{k=1}^{k=N}k^4)\lambda^8s^8h^8=O(N^5\lambda^8s^8h^8)=O(\lambda^3s^8h^3)$.
%\[|Z_N|<\frac{1}{8!}(A N \lambda^2+B N^2\lambda^4+CN^3\lambda^6+DN^4\lambda^8).\]
So
\begin{align}
\notag
\log(\widehat{U}^N(s))&=\left(-\lambda^2 s^2 h^2 +\frac{1}{12}\lambda^2 s^4 h^4-\frac{1}{360}\lambda^2 s^6 h^6\right)\left(\frac{1}{2}(N-1)N+\frac{N}{2}\right)\\
\notag
&-\frac{1}{12}\lambda^6 s^6 h^6 \left(\left(\frac{1}{2}(N-1)N\right)^2+\frac{N^3}{2}\right)+O(s^8h^3)
%\notag
%&+\frac{1}{8}\lambda^4 s^4 h^4 N^2- \frac{1}{48}\lambda^4 s^6 h^6 N^2 +O(s^8h^3).
\end{align}

\noindent as the remaining terms are dominated by $O(s^8h^3)$. Substituting $N=\frac{1}{h\lambda}$ we get

\begin{align}
\notag
\log(\widehat{U}^N(s))&=-\frac{1}{2}s^2 +\left(\frac{1}{24}s^4-\frac{1}{48}\lambda^2 s^6+\frac{1}{8}\lambda^2s^4\right)h^2-\left(\frac{1}{720}s^6+\frac{1}{48}\lambda^2 s^6+\frac{1}{48}\lambda^4 s^6\right)h^4+O(s^8h^3)
\end{align}

%Now
%
%\[\left|\sum_{k=1}^{k=N-1} W_k s^8 h^8 - Z_N s^8 h^8\right| < 2 s^8 h^8 \Psi\]
%\noindent where
%\[\Psi=\sum_{k=1}^{k=N}(A\lambda^2 k+B\lambda^4k^2+C\lambda^6 k^3+D\lambda^8k^4)\]
%\noindent so
%\[\left|\sum_{k=1}^{k=N-1} W_k s^8 h^8 - Z_N s^8 h^8\right| <s^8h^8(O(N^2)+O(N^3)+O(N^4)+O(N^5))\]
%\[=O(s^8h^3)\]

\noindent and hence

\[\log(\widehat{U}^N(s))=-\frac{1}{2}s^2 +\left(\frac{1}{24}s^4-\frac{1}{48}\lambda^2 s^6+\frac{1}{8}\lambda^2 s^4\right)h^2+O(s^8h^3),\]\

\noindent because the $s^6h^4$ term of the expansion is dominated by the term $O(s^8h^3)$.\\

\subsection*{\textbf{Proof of (\ref{expandapp})}:}
\label{app:expand}

We start from (\ref{secexp}),
\[
\log\big(\big|\widehat{U}^N(s)\big|\big)=\log(M(\xi,m^*))+\log\left\{\frac{(1-\frac{1}{\xi(m^*+1)})\ldots(1-\frac{1}{\xi(N-1)})} {(1+\frac{1}{\xi(m^*+1)})\ldots(1+\frac{1}{\xi (N-1)})}\right\}+\log((1+N\xi)^{-1}),
\]
\noindent and write
\[\log{\frac{(1-\frac{1}{\xi(m^*+1)})\ldots(1-\frac{1}{\xi(N-1)})} {(1+\frac{1}{\xi(m^*+1)})\ldots(1+\frac{1}{\xi (N-1)})}}=\sum_{k=m^*+1}^{k=N-1} \left(\log\left(1-\frac{1}{k\xi}\right) -\log\left(1+\frac{1}{k\xi}\right)\right).
\]

\noindent We can expand the logarithmic terms for $\xi\in[1/m^*,2\lambda^2]$ to get

\begin{align}
\notag
\sum_{k=m^*+1}^{k=N-1} \left(\log(1-\frac{1}{k\xi}) -\log(1+\frac{1}{k\xi})\right)&=\sum_{k=m^*+1}^{k=N-1}\left(\sum_{j=1}^{\infty}\frac{(-1)}{j(k\xi)^j}-\sum_{j=1}^{\infty}\frac{(-1)^{j+1}}{j(k\xi)^j}\right)\\
\notag
&=-\sum_{j=0}^{\infty}\frac{2}{(2j+1)\xi^{2j+1}}\left(\sum_{k=m^*+1}^{k=N-1}\frac{1}{k^{2j+1}}\right)
\end{align}

\noindent and, for any $N$, this series converges absolutely in $1/\xi$ as we have just rearranged a finite sum of absolutely convergent series. We can write this as

\begin{align}
\notag
\sum_{k=m^*+1}^{k=N-1} \left(\log(1-\frac{1}{k\xi}) -\log(1+\frac{1}{k\xi})\right)&=-\frac{2}{\xi}\left(\sum_{k=m^*+1}^{k=N-1}\frac{1}{k}\right)  -\sum_{j=1}^{\infty}\frac{2}{(2j+1)\xi^{2j+1}}\left(\sum_{k=m^*+1}^{k=N-1}\frac{1}{k^{2j+1}}\right)\\
\notag
&=-\frac{2}{\xi}\left(\sum_{k=m^*+1}^{k=N-1}\frac{1}{k}\right)+A(\xi,m^*)+B(\xi,N),
\end{align}

\noindent where\\

\[A(\xi,m^*)=-\sum_{j=1}^{\infty}\frac{2}{(2j+1)\xi^{2j+1}}\left(\sum_{k=m^*+1}^{\infty}\frac{1}{k^{2j+1}}\right) \]\\

\noindent and

\[B(\xi,N)=\sum_{j=1}^{\infty}\frac{2}{(2j+1)\xi^{2j+1}}\left(\sum_{k=N}^{\infty}\frac{1}{k^{2j+1}}\right), \]\\

\noindent and we must establish the convergence and size of the functions $A(\xi,m^*)$ and $B(\xi,N)$ (see below).\\

We also have\\

\begin{align}
\notag
-\log(1+N\xi)=-\log{N\xi}-\log{(1+{1}/{N\xi})}&=-\log{N\xi}-\sum_{j=1}^{\infty}\frac{(-1)^{j+1}}{j(N\xi)^j}\\
\notag
&=-\log{N\xi}-\frac{1}{N\xi}+C(\xi,N),
\end{align}

\noindent where

\[C(\xi,N)=-\sum_{j=2}^{\infty}\frac{(-1)^{j+1}}{j(N\xi)^j}\]

\noindent so that

\begin{align}
\notag\
\log\big(\big|\widehat{U}^N(s)\big|\big)&=\log(M(\xi,m^*))-\frac{2}{\xi}\left(\sum_{k=m^*+1}^{k=N-1}\frac{1}{k}\right)+A(\xi,m^*)+B(\xi,N)-\log{N\xi}-\frac{1}{N\xi}+C(\xi,N)\\
\notag\
&=\log(M(\xi,m^*))-\frac{2}{\xi}\left(\sum_{k=m^*+1}^{k=N}\frac{1}{k}\right)+A(\xi,m^*)+B(\xi,N)-\log{N\xi}+\frac{1}{N\xi}+C(\xi,N)\\
\notag\
&=P(\xi,m^*)-\frac{2}{\xi}\left(\sum_{k=1}^{k=N}\frac{1}{k}\right)+D(\xi,N)-\log{N}+O(N^{-1}),
\end{align}
\noindent where
\[P(\xi,m^*)=\log(M(\xi,m^*))-\frac{2}{\xi}\left(\sum_{k=1}^{k=m^*}\frac{1}{k}\right)+A(\xi,m^*)-\log{\xi}\]
and
\[D(\xi,N)=B(\xi,N)+C(\xi,N).\]

Then we can write

\begin{align}
\notag\
\log\big(\big|\widehat{U}^N(s)\big|\big)&=P(\xi,m^*)-\frac{2}{\xi}\left(\log{N}+\gamma+O(N^{-1})\right)+D(\xi,N)-\log{N}+O(N^{-1})\\
\notag\
&=Q(\xi,m^*)-\left(    \frac{2}{\xi}+1     \right) \log{N} + O(N^{-1})  +D(\xi,N),
\end{align}

\noindent where $\gamma$ is the Euler-Macheroni constant \citep[see][]{TT}, and where
\[
Q(\xi,m^*)=P(\xi,m^*)-\frac{2}{\xi}\gamma.
\]
Then using $N=1/(h\lambda)$ and assuming for now
$D(\xi,N)=O(N^{-1})$, which we will show later,
we get
\begin{align}
\notag\
\log\big(\big|\widehat{U}^N(s)\big|\big)&=R(\xi,m^*)+\left(\frac{2}{\xi}+1\right)\log{h}+O(h),
\end{align}
\noindent where 
\[
R(\xi,m^*)=Q(\xi,m^*)+\left(\frac{2}{\xi}+1\right)\log{ \lambda }.
\]\\
Then we finally have

\begin{align}
\notag\
\big|\widehat{U}^N(s))\big|&=S(\xi,m^*)h^{\left(\frac{2}{\xi}+1\right)}(1+O(h))\\
\end{align}

\noindent as we were required to prove. We note that, written out in full, we have
\[
S(\xi,m^*)= M(\xi,m^*) \lambda^{ (\frac{2}{\xi}+1)  }  \exp{  \left(  -\frac{2}{\xi} \left(\gamma+ \sum_{k=1}^{k=m^*}\frac{1}{k} \right)   \right)  }  e^{A(\xi,m^*)}
\]
\noindent and we note that $S(\xi,m^*)$ is continuous and hence bounded on $[1/m^*,2\lambda^2]$.\\

We still need to prove that $A(\xi,m^*)+B(\xi,N)$ is a valid rearrangement and proceed by proving that $A(\xi,m^*)$ is convergent. It will then follow that $B(\xi,N)$ is also convergent. In addition we will show that $D(\xi,N)=B(\xi,N)+C(\xi,N)=O(N^{-2})$.\\

By a result in \citet{Timofte}, we have, for $r \geq 1$,

\[\sum_{k=m^*+1}^{k=\infty}\frac{1}{k^{2r+1}}=\frac{1}{2r(m^*+\theta_{m^*})^{2r}}\]

\noindent for some $\theta_{m^*} \geq \frac{1}{2}$. 
%So
%
%\[\sum_{k=m^*+1}^{k=\infty}\frac{1}{k^{2r+1}} < \frac{1}{2r(m^*+\frac{1}{2})^{2r}}.\]
%
Therefore

\begin{eqnarray*}
|A(\xi,m^*)| &<& \sum_{j=1}^{\infty}\frac{2}{(2j+1)}\frac{1}{\xi^{2j+1}}\frac{1}{2j(m^*+\frac{1}{2})^{2j}} \\
&=& \left(m^*+\frac{1}{2}\right)\sum_{j=1}^{\infty}\frac{1}{j(2j+1)}\frac{1}{(\xi(m^*+\frac{1}{2}))^{2j+1}} \\
&<& \frac{1}{3}\left(m^*+\frac{1}{2}\right)\eta^3\sum_{j=0}^{\infty}\eta^{2j},
\end{eqnarray*}

\noindent where $\eta=1/(\xi(m^*+\frac{1}{2}))<1$ and so $A(\xi,m^*)$ is convergent. It follows that $B(\xi,N)$ is convergent because $A(\xi,m^*)+B(\xi,N)$ is convergent.\\

Next we find out how $B(\xi,N)$ depends on $N$. We have

\begin{eqnarray*}
|B(\xi,N)| &<& 2\sum_{j=1}^{\infty}\frac{1}{(2j+1)}\frac{1}{\xi^{2j+1}}\frac{1}{2j(N-1+\frac{1}{2})^{2j}} \\
&=& 2N\sum_{j=1}^{\infty}\frac{1}{(2j+1)}\frac{1}{(N\xi)^{2j+1}}\frac{N^{2j}}{2j(N-\frac{1}{2})^{2j}},
\end{eqnarray*}
\noindent and as $N/(N-\frac{1}{2}) \leq 2$, we have

\[|B(\xi,N)|<2N\sum_{j=1}^{\infty}\frac{1}{(2j+1)}\frac{1}{(N\xi)^{2j+1}}\frac{2^{2j+1}}{4j} < \frac{1}{12}N \sum_{j=1}^{\infty}\left(\frac{2}{N\xi}\right)^{2j+1}.\]

\noindent Then because we have $2/\xi \leq 2m^*$, we find that for sufficiently large $N$, we have $2/N\xi<c<1$ for a constant $c$ and the geometric series converges and we find that
\[|B(\xi,N)| < \frac{1}{6}N\left(\frac{2}{N\xi}\right)^3\frac{1}{\left(1-\frac{2}{N\xi}\right)}=O(N^{-2})\]
and similarly for $C$.
It then follows that $D(\xi,N)=O(N^{-2})$. % as both $B(\xi,N)$ and $C(\xi,N)$ are $O(N^{-2})$.

\section*{Appendix B.\ Proof of Lemma \ref{L:LemmaB}, (\ref{intapp})}
\label{app:int}

%%%%%%%%%%%%%%%%%%%%%%%%%%%%%%%%%%%%%%%%%%%%%%%%%%%%%%%%%%%%%
%\subsubsection{Wave number regime IV}
%%%%%%%%%%%%%%%%%%%%%%%%%%%%%%%%%%%%%%%%%%%%%%%%%%%%%%%%%%%%%

%Finally, we consider the error contribution from the highest wave number regime, i.e. $s_{m^*} \leq s \leq \frac{\pi}{h}$.
%
%We need to determine the behaviour, as $h \rightarrow 0$, of
%
%\[\int^{\frac{\pi}{h}}_{s=s_{m^*}} D(\xi,m^*)h^{(1+\frac{2}{\xi})} \cos(s x_j)\; ds\]
%
%\noindent the magnitude of which is maximised at $j=0$ (this is because $D$ does not change sign in this wave number regime). So we consider
%
%\[ \int^{\frac{\pi}{h}}_{s=s_{m^*}} |D(\xi,m^*)|h^{(1+\frac{2}{\xi})} ds. \]
%
%\noindent From $\xi=2\lambda^2\sin^2(\frac{sh}{2})$, we find that
%
%\[ds=\frac{d\xi}{h\lambda \sqrt{2}\sqrt{\xi}\sqrt{1-\frac{\xi}{2\lambda^2}}}, \]
%
%\noindent so we need to evaluate
%
%\[ F(h)=\int^{2\lambda^2}_{\xi=\xi_{m^*}} \frac{ D(\xi,m^*)h^{\frac{2}{\xi}} }{  \lambda\sqrt{2}\sqrt{\xi}\sqrt{1-\frac{\xi}{2\lambda^2}} }d\xi. \]
%
%\noindent As the lowest (i.e. dominant) order in $h$ from $h^{\frac{2}{\xi}}$ occurs when $\xi=2\lambda^2$ we expect that this integral is close to $O(h^{\frac{1}{\lambda^2}})$ so w
We want to show that %$I$, where

\[I %=\frac{F(h)}{h^{\frac{1}{\lambda^2}}}
=\int^{2\lambda^2}_{\xi=\xi_{m^*}} \frac{ S(\xi,m^*)h^{(\frac{2}{\xi}-\frac{1}{\lambda^2})}}{  \lambda\sqrt{2}\sqrt{\xi}\sqrt{1-\frac{\xi}{2\lambda^2}} } \, d\xi =O\left(\frac{1}{ \sqrt{   \log{   \frac{1}{h}   }    }}\right),
\]
where $S(\cdot,m^*)$ is continuous.
%\noindent and so w
We make the substitution $z^2=1-{\xi}/{2\lambda^2}$ to obtain 
% and $y=z^2$ to get
%
%\[I=2\lambda^2\int_{y=0}^{y=B}\frac{S(y,m^*)}{\sqrt{y}}h^{   \left(   \frac{1}{\lambda^2}\frac{y}{1-y}  \right)   } \, dy, \]
%
%\noindent where $B=1-\frac{\xi_{m^*}}{2\lambda^2}$, and hence

\[
I=4\lambda^2\int_{z=0}^{z=\sqrt{B}}S^*(z,m^*)h^{\left(\frac{1}{\lambda^2}\frac{z^2}{1-z^2}\right)} \, dz,
\]

\noindent where $B=1-\frac{\xi_{m^*}}{2\lambda^2}$, and where $S^*(z,m^*)={S(\xi,m^*)}/\sqrt{1-z^2}$ is a continuous function on $[0,\sqrt{B}]$ as $B < 1$.

We want to show that this integral is concentrated around $z=0$ for small $h$ and write

\[I=I_1+I_2,\]

\noindent where

\[I_1=4\lambda^2\int_{z=0}^{z=\sqrt{A}}S^*(z,m^*)h^{\left(\frac{1}{\lambda^2}\frac{z^2}{1-z^2}\right)} \,dz\]

\noindent and

\[I_2=4\lambda^2\int_{z=\sqrt{A}}^{z=\sqrt{B}}S^*(z,m^*)h^{\left(\frac{1}{\lambda^2}\frac{z^2}{1-z^2}\right)} \, dz, \]

\noindent where $A$ is chosen so that, asymptotically, $I_1$ dominates $I_2$.

Firstly, we consider $I_2$. For $h < 1$, the second factor of the integrand
%$h^{ \left( \frac{1}{\lambda^2}\frac{z^2}{1-z^2} \right)}$ 
is decreasing in $z$, so
\[|I_2|<4\lambda^2 S^*_{max} h^{\left(\frac{1}{\lambda^2}\frac{A}{1-A}\right)},\]
\noindent where $S^*_{max}$ is a bound for $S^*$ on $[\xi_{m^*},2\lambda^2]$.
We now take
\[\frac{A}{1-A}=\frac{1}{\sqrt{\log{{1}/{h}}}}\]

\noindent and note that

\[\log{h^{\left(\frac{1}{\lambda^2}\frac{A}{1-A}\right)}}=-\frac{1}{\lambda^2}\frac{A}{1-A}\log{\frac{1}{h}}=-\frac{1}{\lambda^2}\sqrt{\log{\frac{1}{h}}}\]

\noindent so \[|I_2|<4\lambda^2 S^*_{max} e^{    -\frac{1}{\lambda^2} \sqrt{ \log{ \frac{1}{h} } }}
\rightarrow 0 \quad \mathrm{ as } \;\;\; h \rightarrow 0.\]

Now we want to show that

\[  \lim_{h \rightarrow 0} I_1 \sqrt{ \log{ \frac{1}{h} } } = \lim_{h \rightarrow 0} \left(4\lambda^2\sqrt{\log{\frac{1}{h}}} \int_{z=0}^{z=\sqrt{A}} S^*(z,m^*)h^{\left(\frac{1}{\lambda^2}\frac{z^2}{1-z^2}\right)} \, dz\right)\ =O(1)\]\\

\noindent and we note that

\[A=\frac{1}{1+\sqrt{\log{{1}/{h}}}} \rightarrow 0 \quad \mathrm{as} \;\;\: h \rightarrow 0.\]

\noindent We make a final substitution

\[\eta=\frac{z}{\lambda}\sqrt{\log{\frac{1}{h}}}, \]

\noindent to get

\[\lim_{h \rightarrow 0}I_1\sqrt{\log{\frac{1}{h}}}=4\lambda^2\lim_{h \rightarrow 0} \int_{\eta=0}^{\eta=\eta^*}S^*(\lambda \eta/\sqrt{\log{{1}/{h}}},m^*)\exp{\left(-\frac{\eta^2}{1-\lambda^2 \eta^2/\log{(1/h)}}\right)}\, d\eta,\]\

\noindent where the upper limit
\[
\eta^*=\frac{\sqrt{A}}{\lambda}\sqrt{\log{\frac{1}{h}}}=\frac{   \sqrt{   \log{   {1}/{h}   }    }    }{ \lambda  \sqrt{    1+\sqrt{    \log{{1}/{h}}   }     }      }
\]
tends to infinity as $h \rightarrow 0$.
%\noindent 
As $S^*(\cdot,m^*)$ is bounded and continuous at 0 we see that

\[ \lim_{h \rightarrow 0} \int_{\eta=0}^{\eta=\eta^*}S^*(\lambda \eta/\sqrt{\log{{1}/{h}}},m^*)\exp{ \left(-\frac{\eta^2}{1-\lambda^2 \eta^2/\log{(1/h)}}\right) }\, d\eta=S^*(0,m^*)\int_{\eta=0}^{\eta=\infty}\exp{\left(-\eta^2\right)}\, d\eta\]\\

\noindent so we have $I_1\sqrt{\log{{1}/{h}}}=O(1)$ as $h \rightarrow 0$.\\
%
%\[\lim_{h \rightarrow 0}I_1\sqrt{\log{\frac{1}{h}}}=4\lambda^2S^*(0,m^*)\lim_{h \rightarrow 0} \int_{\eta=0}^{\eta=\eta^*}e^{-\eta^2}d\eta=C,\]\
%
%\noindent we conclude that for some constant $C$.\\
%
%\[\lim_{h \rightarrow 0}I_1\sqrt{\log{\frac{1}{h}}}=4\lambda^2S^*(0,m^*)\lim_{h \rightarrow 0} \int_{\eta=0}^{\eta=\eta^*}e^{-\eta^2}d\eta=C,\]\

From this we see that $I_1=O({1}/{ \sqrt{   \log{   \frac{1}{h}   }    }})$ and we also have

\[\lim_{h \rightarrow 0}\left|\frac{I_2}{I_1}\right|=\lim_{h \rightarrow 0}\sqrt{   \log{   \frac{1}{h}   }    }e^{    -\frac{1}{\lambda^2} \sqrt{ \log{ \frac{1}{h} } }}= 0\]

\noindent so we have established that $I_1$ asymptotically dominates $I_2$ and so $I=O({1}/{ \sqrt{   \log{ {1}/{h}   }    }})$.

%\indent Then we conclude that the error contribution from wave number regime IV is $O(\frac{h^{\frac{1}{\lambda^2}}}{ \sqrt{   \log{   \frac{1}{h}   }    }})$ as we set out to prove.

\end{document}